\numberwithin{equation}{section}
\numberwithin{figure}{section}
\renewcommand{\geq}{\geqslant}
\renewcommand{\leq}{\leqslant}
\newcommand{\be}{\begin{equation}}
\newcommand{\ee}{\end{equation}}
\theoremstyle{plain}
\newtheorem{THEOREM}{Theorem}[section]
\newtheorem{theorem}[THEOREM]{Theorem}
\newtheorem{corollary}[THEOREM]{Corollary}
\theoremstyle{definition}
\theoremstyle{remark}
\theoremstyle{question}
\newtheorem{remark}[THEOREM]{Remark}
\def \dim {d} %{\mathrm{d}}
\def \n {\nabla}
\def\bu{{\mathbf u}}
\def\bw{{\mathbf w}}
\def\bk{{\mathbf k}}
\def\one{} %{1}
\def\per{2\pi} %{1} %{p}
\def\twopip{} % {2\pi} %{\frac{2\pi}{\per}}
\def\hk{(\per)^{\dim}}
\def\ck{(\per)^{-\frac{\dim}{2}}} 
\def\dk{(\per)^{\frac{\dim}{2}}}
\def\eK{e_K}
\def\temp{T}
\def\jnd{{\mathcal J}}
\def\amp{\tau}
\def\om{\Omega}
\def \bk {{\bf k}}
\def \bx {{\mathbf x}}
\def \bxp {{\mathbf x}'}
\def \by {\bxp} %{{\mathbf y}}
\def \xp {{x}'}
\def \bu {{\mathbf u}}
\def \bv {{\mathbf v}}
\def \bvp {{\mathbf v}'}
\def \bz {{\mathbf z}}
\def \bc {{\mathbf c}}
\def\A{{\mathbf A}} %{\mathcal A}}
\def\E{E} %{\,{\mathcal E}} specific energy per unit mass
\def\Etotal{{\mathscr E}} %total energy
\def\Egraph{{\sf E}}
\def\Vgraph{{\sf V}}
\def\delE{\delta\Etotal}
\def\div{\nabla_\bx\!\cdot\!}
\def\nablaS{\nabla_{\!\!{}_S}}
\def\nablabu{\nabla \bu}  %{\frac{\partial \bu}{\partial \bx}} 
\def\avrho{\phi*\rho} %{\overline{\rho}_\phi}
\def\avrhoz{\phi*\rho_0} %{(\overline{\rho_0})_\phi}
\def\dmrho{\rho\rho'\dx\dxp} %{{\d}{\sf m}_\rho}
\newcommand{\R}{\ensuremath{\mathbb{R}}}   %%% reals
\newcommand{\T}{\ensuremath{\mathbb{T}}}   %%% torus
\def\d{{\sf{d}}}
\DeclareMathOperator{\supp}{supp} %
\DeclareMathOperator{\diam}{diam} %
\def \hf {\frac{1}{2}}
\def\suppr{\supp\{\rho(\cdot,t)\}}
\def \dx  {\, \d\bx} %{\, \mathrm{d}\bx}
\def\dxp {\, \d\bxp} %{\, \mathrm{d}\bx'}
\def\dvp {\, \d\bv'} %{\, \mathrm{d}\bv'}
\def \dy  {\dxp} %{\, \mbox{d}\by}
\def \dz  {\, \mathrm{d}\bz}
\def \dv  {\, \mathrm{d}\bv}
\def \ddt  {\frac{\d}{\d t}} %{\frac{\mathrm{d\,\,}}{\mathrm{d}t}}
\def \presure {{\mathbb P}}
\def \Lap {{\mathscr L}} %Laplacian
\def \etavar {\sigma_\varphi}
\begin{document}

\title[Emergent Behavior in Alignment Dynamics]{On the Mathematics of Swarming:\\Emergent Behavior in Alignment Dynamics}

\author{Eitan Tadmor}
\address{Department of Mathematics and Institute for Physical Sciences \& Technology (IPST), University of Maryland, College Park}
\email{tadmor@umd.edu}

\date{\today}

\subjclass{35Q35, 76N10, 92D25,}

\keywords{flocking, alignment, spectral analysis, strong solutions, critical thresholds.}

\thanks{\textbf{Acknowledgment.} Research was supported in part by NSF grants DMS16-13911 and ONR grant N00014-1812465.}

\begin{abstract}
We overview recent developments in the study of alignment hydrodynamics, driven by  a general class of symmetric communication kernels. 
A main question of interest is to characterize  the emergent behavior of such systems, which we quantify in terms of the spectral gap of a \emph{weighted} Laplacian  associated with the alignment operator. 
Our spectral analysis of energy fluctuation covers both long-range and short-range   kernels and does not require thermal equilibrium (no closure for the pressure). In particular, in the prototypical case of metric-based short-range kernels, the  spectral gap admits a lower-bound expressed in terms of the discrete Fourier coefficients of the radial kernel, which enables us to  quantify an emerging flocking behavior for \emph{non-vacuous} solutions.
These large-time behavior results apply as long as the solutions remain smooth. It is known that global smooth solutions exist in one and two spatial dimensions, subject to sub-critical initial data. We settle the question for arbitrary dimension, obtaining non-trivial initial threshold conditions which guarantee existence of multiD global smooth solutions.
\end{abstract}

\maketitle
\setcounter{tocdepth}{1}
\tableofcontents

%%%%%%%%%%
\section{Introduction}
%%%%%%%%%%
The starting point of our discussion is the  celebrated Cucker-Smale (CS)  model \cite{CS2007a,CS2007b} which describes the dynamics of $N$ particles, viewed as agents,  with (position, velocity) pairs $(\bx_i(t),\bv_i(t)): \R_+ \mapsto (\om,\R^d)$,
\be\label{eq:CS}
\left\{ \ \ \begin{split}
\dot{\bx}_i(t)&=\bv_i(t)\\
\dot{\bv}_i(t)&=\frac{\amp}{N}\sum_{j=1}^N \phi_{ij}(t)(\bv_j(t)-\bv_i(t)), 
\end{split}\right.
\ee
and subject to prescribed initial conditions, $(\bx_i(0),\bv_i(0))=(\bx_{i0},\bv_{i0})\in (\om,\R^d)$. The ambient space of positions $\om\subset \R^d$ will refer to one of two main scenarios --- either $\om=\R^d$ or $\om=\T^d$. 
System \eqref{eq:CS} is a canonical model for \emph{alignment dynamics}
in which pairwise interactions steer towards average heading.
Alignment originated in pioneering works  \cite{Aok1982,Rey1987,VCBCS1995,CS2007a,CS2007b,CKJRF2002}, as a key ingredient in \emph{self-organization} --- a unity from within which leads to the emergence of higher-order, large-scales patterns.
It is found in ecology --- from flocking of birds and schooling of fish  to swarming bacteria and  insects;  in social dynamics of human interactions --- from alignment of pedestrians and emerging consensus of opinions to markets  and marketing; and in sensor-based networks --- from swarming of mobile robots and control of  UAVs  to macro-molecules and metallic rods.

The dynamics \eqref{eq:CS} governs  pairwise interactions, $\phi_{ij}=\phi(\bx_i,\bx_j)$, dictated by  a scalar \emph{communication kernel} $\phi(\cdot,\cdot)$ with amplitude  $\amp>0$.
We assume that $\phi(\cdot,\cdot)\in L^\infty (\om\times\om)$ is a non-negative symmetric  kernel, properly normalized
\begin{equation}\label{eq:phi}
\int_\om\phi(\bx,\bxp)\dxp \equiv 1, \qquad \phi(\bx,\bxp)=\phi(\bxp,\bx)\geq 0.
\end{equation}

The role for the kernel $\phi$ is context-dependent:
its approximate shape is either derived empirically, deduced from higher order principles, learned from the data or postulated based on phenomenological arguments, e.g., \cite{VCBCS1995,CF2003,CKFL2005,Bal2008,Ka2011,VZ2012,Bia2012,Bia2014, CS2007a,BDT2017,MLK2019,BDT2019,LZTM2019,ST2021} and the references therein. The specific structure of $\phi$, however,  is not necessarily known.
Instead, we ask how different \emph{classes} of communication kernels affect the emergent behavior of \eqref{eq:CS}. Here are few examples for different communication protocols.\newline
A major part of current literature is devoted to the  generic class of  \emph{metric-based} kernels,
$\phi(\bx,\bxp)=\varphi(|\bx-\bxp|)$. A prototype example which goes back to Cucker-Smale \cite{CS2007a}\footnote{We use the abbreviation  $\langle r\rangle :=(1+r^2)^{1/2}$.} $\varphi(r)=\langle r\rangle ^{-\theta}, \ \theta>0$. This is motivated by the phenomenological reasoning that the strength of pairwise interactions is inversely dependent on their relative distance, ``\emph{birds of feather flock together}''. Thus, the emphasize is on the behavior of $\varphi(r)$ for $r\gg1$. This can be contrasted with an opposite protocol based on tendency to attract diverse groups so that $\varphi(r)$ is \emph{increasing} over its compact support, e.g., the \emph{heterophilous} dynamics studied in \cite{MT2014}.\newline
An important protocol of metric-based communication which  will not be analyzed here  includes  the class of \emph{singular} kernels, e.g., the Riesz kernel, 
\begin{equation}\label{eq:singular}
\varphi(r)=\frac{1}{r^{\beta}}, \qquad 0<\beta <d+2,
\end{equation}
 where the communication  emphasizes near-by neighbors, $r\ll 1$, over those farther away, e.g., \cite{Pes2015,CCMP2017,ST2017a,ST2018a,DKRT2018,PS2019,MMPZ2019} and the references therein.\newline
 Another example is the class of \emph{topologically-based} kernels, \cite{Bal2008,ST2020b}, where $\phi(\bx,\bxp)=\varphi(\mu(\bx,\bxp))$ is dictated  by the size of the crowd   in an intermediate  domain of communication ${\mathcal C}(\bx,\bxp)$ enclosed between $\bx$ and $\bxp$,
\begin{equation}\label{eq:whatismu}
 \mu(\bx,\bxp):=\frac{1}{N}\#\{k:\, \bx_k\in {\mathcal C}(\bx,\bxp)\}.
\end{equation}
In particular, if the domain of communication ${\mathcal C}$ is shifted to an $R$-ball centered at $\bx$, one ends up with the \emph{non-symmetric} topological kernel \cite{MT2011}
$\displaystyle \phi(\bx,\bxp)={\varphi(|\bx-\bxp|)}/{\mu(B_R(\bx))}$. 
Other important protocols of communication which  will not be analyzed here  are based on various \emph{random-based} protocols found in chemo- and photo-tactic dynamics,  the Elo rating system, voter and related opinion-based models, a random-batch method and consensus-based optimization, to name but a few, \cite{DAWF2002,BeN2005,CFL2009,GWBL2012,JJ2015,PTTM2017,CCTT2018,JLL2020}.\newline
As a prototype example for `higher order principles' we mention the anticipation-based dynamics \cite{MCEB2015,GTLW2017,ST2021}
\[
\dot{\bv}_i(t)=-\frac{1}{N}\sum_{j=1}^N\nabla_i U(|\bx_i^\tau-\bx_j^\tau|), 
\]
in which agents at time $t$ react to the \emph{anticipated} positions, $\bx_j^\tau(t):=\bx_j(t)+\tau\bv_i(t)$ at time $t+\tau$. The dynamics is governed by a radial potential $U$. Expanding in the (assumed small) $\tau>0$ we find
\begin{equation}\label{eq:3Zone}
\dot{\bv}_i=\frac{\tau}{N}\sum_{j=1}^N \Phi_{ij}(\bv_j-\bv_i)
-\frac{1}{N}\sum_{j=1}^N \nabla U(|\bx_j-\bx_i|).
\end{equation}
The first sum on the right represent the alignment term with amplitude $\tau$, driven by the average Hessian, $\Phi_{ij}=\widetilde{D^2 U}(|\bx_i-\bx_j|)$, and is a prototype for a
a still larger class of symmetric \emph{matrix} communication kernels; the scalar case $\Phi_{ij}=\phi_{ij}{\mathbb I}$ recovers the Cucker-Smale model \eqref{eq:CS}.  In addition to alignment, there is the  second sum on the right of \eqref{eq:3Zone} which governs pairwise  \emph{repulsions} and \emph{attractions}, corresponding to $U'(|\bx_i-\bx_j|)<0$ and respectively, $U'(|\bx_i-\bx_j|)>0$.\newline
A general  protocol for rules of engagement, with pairwise interactions driven by   alignment, repulsion and attraction which are dominant in three different zones of proximity, was proposed in the pioneering work \cite{Rey1987}. We shall focus here  on the emergent behavior of alignment dynamics, and refer to \cite{CFTV2010,CDMBC2007,ST2021} and the references therein,  for results  related to more general protocols. To date,  we still lack a mathematical theory which analyzes the emergent behavior of 
the general class of 3Zone models for collective dynamics.

%%%%%%%%%%%%%%%%%%
\subsection{Connectivity}
%%%%%%%%%%%%%%%%%%
The large time behavior of \eqref{eq:CS} depends on the time-dependent weighted graph with agents at the $N$ vertices ${\Vgraph}(t)=\{ i\, | \, \bx_i(t)\}$ and time-dependent edges 
${\Egraph}(t)=\{ (i,j) \, | \, i\neq j: \, \phi_{ij}(t) >0\}$.
 The energy fluctuations associated with \eqref{eq:CS}, 
 \[
 \delE(t):= \frac{1}{N^2}\sum_{i,j=1}^N |\bv_i(t)-\bv_j(t)|^2,
 \]
 satisfy
\begin{equation}\label{eq:disfluc}
\begin{split}
\ddt \delE(t) &= - \frac{2\amp}{N^2}\sum_{(i,j)\in {\Egraph}(t)}\!\!\!\phi_{ij}(t) |\bv_i(t)-\bv_j(t)|^2  \leq -\lambda_2(t)\frac{2\amp}{N^2} \sum_{i,j=1}^N |\bv_i(t)-\bv_j(t)|^2.
\end{split}
\end{equation}
The first equality follows directly from \eqref{eq:CS} and the assumed symmetry of the adjacency matrix $\Phi(t)=\{\phi_{ij}(t)\}$. The second inequality is a sharp bound in terms of the \emph{spectral gap}, $\lambda_2(t):=\lambda_2(\Delta_{\Phi(t)})$, of the graph Laplacian  associated with $\Phi(t)$. 
Here the graph Laplacian,  $(\Delta _\Phi)_{\alpha\beta}:=(\sum_{\gamma\neq \alpha} \phi_{\alpha\gamma})\delta_{\alpha\beta}-\phi_{\alpha\beta}(1-\delta_{\alpha\beta})$, and its spectral gap coincides with the Fiedler number, \cite{Fie1973,Fie1989},which encodes the connectivity properties of the weighted graph of agents $({\Vgraph}(t), {\Egraph}(t))$, e.g., \cite{CS2007a}.
Indeed, \eqref{eq:disfluc} tells us that  energy fluctuations are depleted as long as  the graph  remains (algebraically) \emph{connected}
\begin{equation}\label{eq:discfluc}
\delE(t) \leq  exp\left\{-2\amp \!\int^t \!\!\lambda_2(s){\d}s\right\}\delE(0).
\end{equation}
Connectivity --- and hence the large time emergence of flocks or swarms, is guaranteed with \emph{long-range} kernels.
%, which is the topic of section \ref{sec:long}. 
In many realistic configurations, however, the communication among `social particles'  takes place in local neighborhoods induced by \emph{short-range} kernels. \newline
Long and short-range communication kernels will be the topic of the next two sections.
 Long-range kernels maintain connectivity which in turn imply decay of fluctuations around an emergent cluster. The large time dynamics with short-range kernels  is  considerably more complicated --- in particular,  algebraically  connected initial configurations, $\lambda_2(t=0)>0$,  may break down  into two or more disconnected clusters at a finite time so that  $\lambda_2(t_c)=0$. That is, the  dynamics with short-range kernels may or may not be stable, which makes it difficult to trace its flocking behavior. Instead, we study here the flocking/swarming behavior with \emph{large crowds}: large crowd dynamics tends to stabilize the large-time behavior. As already noted by Immanuel Kant in 1784 \cite[p. 11]{Kant1784}
 ``\emph{what seems complex and chaotic in the single individual may be seen from the standpoint of the human race as a whole to be a steady and progressive though slow evolution of its original endowment}''.

%%%%%%%%%%%%%%%%%%
\subsection{Hydrodynamic description}
%%%%%%%%%%%%%%%%%%
The large crowd dynamics of \eqref{eq:CS} is encoded in terms of the empirical distribution $f_N(t,\bx,\bv)= \frac{1}{N}\sum_{i=1}^N \delta_{\bx_i(t)}(\bx)\otimes \delta_{\bv_i(t)}(\bv)$, which is governed by the kinetic equation in state variables   $(t,\bx,\bv)\in \R_+\times\om\times \R^d$, e.g.,
\cite{HT2008,HL2009,CFTV2010},
\begin{equation}\label{eq:Q}
\begin{split}
\partial_t f_N +\bv\cdot\nabla_\bx f_N = -\amp \nabla_\bv\cdot Q_\phi(f_N,f_N), \qquad (t,\bx,\bv)\in \R_+\times\om\times \R^d.
 \end{split}
\end{equation}
It is driven according to the pairwise  communication protocol\footnote{We abbreviate $\bu=\bu(t,\bx), f'=f(t,\bxp,\bvp)$ and likewise $\square=\square(t,\bx), \ \square'=\square(t,\bxp,\bvp)$ etc.} on the right of \eqref{eq:CS}${}_{2}$
\begin{equation}\label{eq:Qphi}
 Q_\phi(f,f):=\iint_{\R^{d}\times \om}\phi(\bx,\bxp)(\bvp-\bv)f(t,\bx,\bv)f(t,\bxp,\bvp)\dvp\dxp.
\end{equation}
 For  $N\gg 1$, the dynamics of  $f_N(t,\bx,\bv)$ is captured by its first two moments which we assume to exist --- the density $\displaystyle \rho(t,\bx):=\lim_{N\rightarrow \infty} \int_{\R^d} f_N(t,\bx,\bv)\dv$, and   the momentum, $\displaystyle  \rho \bu(t,\bx):=\lim_{N\rightarrow \infty} \int_{\R^d} \bv f_N(t,\bx,\bv)\dv$. 
  They admit the \emph{hydrodynamic description} in state variable $(t,\bx)\in(\R_+\times \om)$,
  \begin{subequations}\label{eqs:hydro}
  \begin{equation}\label{eq:hydro}
 \left\{\ \ \begin{split}
 \rho_t + \n_\bx \cdot (\rho \bu) & = 0, \\
(\rho\bu)_t + \nabla_\bx\cdot (\rho\bu \otimes \bu + \presure) &=\amp\A_\phi(\rho,\bu),
\end{split} \right. \qquad (t,\bx)\in \R_+\times \om.
 \end{equation}
Here, the pressure $\presure$  on the left of \eqref{eq:hydro}${}_2$ is a symmetric positive-definite stress  tensor
\begin{equation}\label{eq:Ptensor}
\presure(t,\bx):= \lim_{N\rightarrow \infty}\int_{\R^d} (\bv - \bu)(\bv - \bu)^\top f_N(t,\bx,\bv) \dv,
\end{equation}
and $\A_\phi$ on the right  of \eqref{eq:hydro}${}_2$ is the communication protocol associated with $\phi$, corresponding to \eqref{eq:Qphi},
\begin{equation}\label{eq:alignment}
\A_\phi(\rho,\bu)(t,\bx):=\int_\om \phi(\bx,\by)\big(\bu(t,\bxp)-\bu(t,\bx)\big)\rho(t,\bx)\rho(t,\by)\dy.
\end{equation} 
\end{subequations}
Observe that system \eqref{eqs:hydro} is not a purely hydrodynamic description at the macroscopic scale:  while  the density  and velocity, $\rho=\rho(t,\bx)$ and $\bu=\bu(t,\bx)$, are governed by the macroscopic balances \eqref{eq:hydro},\eqref{eq:alignment}, the pressure $\presure$  in \eqref{eq:Ptensor}, $\presure=\presure(t,\bx)$,  still requires a  \emph{closure} of the $\bv$-dependent second-order moments of $f_N$.  We recall that in the  case of physical particles, one encounters the canonical closure imposed by Maxwellian equilibrium  and expressed in terms of the  density $\rho$, velocity  $\bu$ and temperature $\temp$, \cite{Cer1990,Lev1996,Gol1997/98},
\[
M_{\{\rho,\bu,\temp\}}(t,\bx,\bv)= \frac{\rho}{(2\pi \temp)^{d/2}}\textnormal{exp}\left(-\frac{|\bv-\bu|^2}{2\temp}\right). 
\]
We mention in this context the  special  cases of \emph{mono-kinetic closure}, $\presure\equiv 0$, associated with the vanishing temperature $M_{\{\rho,\bu,\temp\downarrow 0\}}(t,\bx) =\rho\delta(|\bv-\bu|)$, \cite{KV2015,FK2019}, an entropic-based closure with measured data in \cite{Bia2012} and the   isothermal closure studied in \cite{KMT2015} $\presure=\rho{\mathbb I}_{d\times d}$ (corresponding to constant temperature $\temp$).\newline
 The general case of `social particles', however,  is different: there is no universal Maxwellian closure. This issue will be revisited in section \ref{sec:CT}.
 The  question of closure in the  context of our discussion on   hydrodynamic description for alignment \eqref{eqs:hydro},  is therefore left open. Indeed, we highlight the fact that  the decay of energy fluctuations quantified in the next section applies to general mesoscopic  pressure stress tensors \eqref{eq:Ptensor}. 

%%%%%%%%%%%%%%%%%
\subsection{Fluctuations} 
%%%%%%%%%%%%%%%%
The total energy of the large-crowd dynamics associated with \eqref{eq:CS} is given by the second moment (which is assumed to exist)
\[
  \rho \E(t,\bx):=\lim_{N\rightarrow \infty} \int_{\R^d} \frac{1}{2}|\bv|^2 f_N(t,\bx,\bv)\dv.
\]
It satisfies  the energy equation
\begin{equation}\label{eq:E}
(\rho \E)_t + \nabla_\bx\cdot (\rho \E\bu + \presure\bu +{\mathbf q}) =-\amp\int\phi(\bx,\bxp)\big(2\E(t,\bx)-\bu\cdot \bu'\big)\rho\rho'\dxp.
\end{equation}
The energy flux on the left of \eqref{eq:E}  is computed as the second  moment of \eqref{eq:Q}, 
\begin{align*}
 \lim_{N\rightarrow \infty}\int \frac{|\bv|^2}{2}&\bv f_N(t,\bx,\bv)\dv =  \bu \lim_{N\rightarrow \infty} \int_{\R^d} \frac{|\bv|^2}{2}f_N \dv + \lim_{N\rightarrow \infty}\int_{\R^d}  \frac{|\bv|^2}{2}(\bv-\bu)f_N \dv\cr
&=   \rho \E\bu + \lim_{N\rightarrow \infty}\int_{\R^d}  \Big( \frac{|\bu|^2}{2}+(\bv-\bu)\cdot \bu+\frac{|\bv-\bu|^2}{2}\Big)(\bv-\bu)f_N \dv  \cr
&=   \rho \E \bu  +  \presure\bu + {\mathbf q},
\end{align*}
expressed in terms of  $\presure$ and the heat flux $\displaystyle  {\mathbf q}(t,\bx):= \lim_{N\rightarrow \infty}\frac{1}{2}\int (\bv-\bu)|\bv-\bu|^2f_N(t,\bx,\bv)\dv$.  The energy production on the right of \eqref{eq:E}  is driven by the  \emph{enstrophy} 
\begin{eqnarray*}
  \iiint \phi(\bx,\bxp) \bv\cdot(\bv-\bvp)ff'\dvp \dv \dxp 
= \int\phi(\bx,\bxp)\big(2\E(\bx)-\bu\cdot \bu'\big)\rho(t,\bx)\rho(t,\bxp)\dxp.
\end{eqnarray*}

The energy   can be decomposed as the sum of kinetic and internal energies,
$\rho \E = \rho \eK + \rho e$, corresponding to the first two terms in the decomposition of kinetic velocity $ \frac{1}{2}|\bv|^2\equiv \frac{1}{2}|\bu|^2+\frac{1}{2}|\bv-\bu|^2+(\bv-\bu)\cdot \bu$,
\[%begin{equation}\label{eq:En}
\begin{split} 
\rho \eK(t,\bx) &:=  \frac{1}{2}\rho{|\bu|^2(t,\bx)}, \\
(\rho e)(t,\bx) &:=\lim_{N\rightarrow \infty}\frac{1}{2}\int|\bv-\bu(t,\bx)|^2 f_N(t,\bx,\bv)\dv.
\end{split}
\]%end{equation}
Let $\delE(t)$ denote the total \emph{energy fluctuations} at time\footnote{Here and below we abbreviate $\dmrho:=\rho(t,\bx)\rho(t,\bxp)\dx\dxp$.} $t$,
\[
\delE(t):= \frac{1}{2m_0}\iint \Big(\frac{1}{2}|\bu(t,\bx)-\bu(t,\bxp)|^2+e(t,\bx)+e(t,\bxp)\Big)\dmrho.
\]
The first integrand quantifies   fluctuations of macroscopic velocities, $\bu(t,\cdot)$, while the last two terms quantify  microscopic fluctuations,  $|\bv-\bu(t,\cdot)|^2$. 
Its decay rate is given by
\begin{equation}\label{eq:book}
\ddt \delE(t)=-\amp\iint\phi(\bx,\bxp)\Big(\frac{1}{2}|\bu(t,\bx)-\bu(t,\bxp)|^2+e(t,\bx)+e(t,\bxp)\Big)\dmrho.
\end{equation}
This follows by integration of the energy equation \eqref{eq:E}: on the left we use $\displaystyle 2m_0\int \rho \E \dx = \iint \left[\frac{1}{2}|\bu|^2+\frac{1}{2}|\bu'|^2 + e +e'\right] \dmrho$ and the fact  that $\displaystyle \ddt \int \rho\bu(t,\bx) \dx \equiv 0$; on the right, we use the symmetric part of the enstrophy 
\[
2\E-\bu\cdot \bu'\equiv \frac{1}{2}|\bu-\bu'|^2 + e+e' + \frac{1}{2}\big(|\bu|^2-|\bu'|^2\big) +(e-e').
\]
Equation \eqref{eq:book}  quantifies  the     decay of  energy fluctuations on the left in terms of the total enstrophy on the right and is a key ingredient in studying the emergent behavior of \eqref{eqs:hydro}.
 %%%%%%%%%%%%%%%%%%%
 \subsection{Flocking/Swarming}
 %%%%%%%%%%%%%%%%%%
A main feature of the large-crowd hydrodynamics \eqref{eqs:hydro} is the emerging  behavior of \emph{flocking} --- a large-time behavior  characterized by the following two features.

\smallskip\noindent
\begin{subequations}\label{eqs:flocking}
(i) Finite diameter --- the continuum crowd supported in ${\mathcal S}(t)=\suppr$ forms a `flock' with a finite diameter 
\begin{equation}\label{eq:finiteD}
D(t):=\mathop{\sup}_{\bx,\bxp\in {\mathcal S}(t)}|\bx-\bxp| \leq D_+ <\infty, \qquad {\mathcal S}(t):=\suppr;
\end{equation}

\noindent
(ii) Alignment --- velocity fluctuations inside the flock vanish for $t \gg 1$,  
\begin{equation}\label{eq:vanishingu}
\iint|\bu(t,\bx)-\bu(t,\bxp)|^2\dmrho \ \stackrel{t\rightarrow \infty}{\longrightarrow} 0.
\end{equation}
\end{subequations}
 Remark that one can combines \eqref{eq:vanishingu} with   global time invariant of the momentum $\displaystyle \overline{\rho\bu}(t):=\int_{\om}\rho\bu(t,\bx)\dx$ and mass $\displaystyle m(t):=\int_{\om}\rho(t,\bx)\dx$, to deduce the emergent behavior towards the average velocity
$\displaystyle \overline{\bu}_0$, 
 \begin{equation}\label{eq:flock}
 \delE(t) = \int \left[\frac{1}{2}|\bu(t,\bx)-\overline{\bu}_0|^2+e(t,\bx)\right]\rho(t,\bx)\dx \stackrel{t\rightarrow \infty}{\longrightarrow} 0, \qquad \overline{\bu}_0:=\frac{(\overline{\rho\bu})_0}{m_0}.
 \end{equation}
  Thus, large time decay of fluctuations  takes place  while the dynamics is asymptotically aligned along straight particle paths $\bx_c(t) \sim \overline{\bu}_0 t$. We mention in passing that the decay of fluctuations  in the presence of more general protocol of interactions ---  repulsion, attraction and external forcing,  leads to  emergent behavior with different and more `interesting' patterns. For example, when alignment is augmented   by pairwise attraction induced by quadratic potential \eqref{eq:3Zone}, it implies flocking  of agents which are spatially concentrated along particle path  of harmonic oscillators, $(\bx_c(t),\bu_c(t)$, depicted in figure \ref{fig:CS+Qpotential}, $\rho\bu(t,\bx)-m_0\bu_c(t)\delta(\bx-\bx_c(t))  \stackrel{t \rightarrow \infty}{\longrightarrow} 0$, \cite[Theorem 2.5]{ST2020a}. 
 A rich gallery of emerging swarming patterns is found in \cite{CDMBC2007, CDMBC2007,CDP2009,CFTV2010,BCLR2013}.
 
\begin{figure}[ht]
	\includegraphics[width=5in]{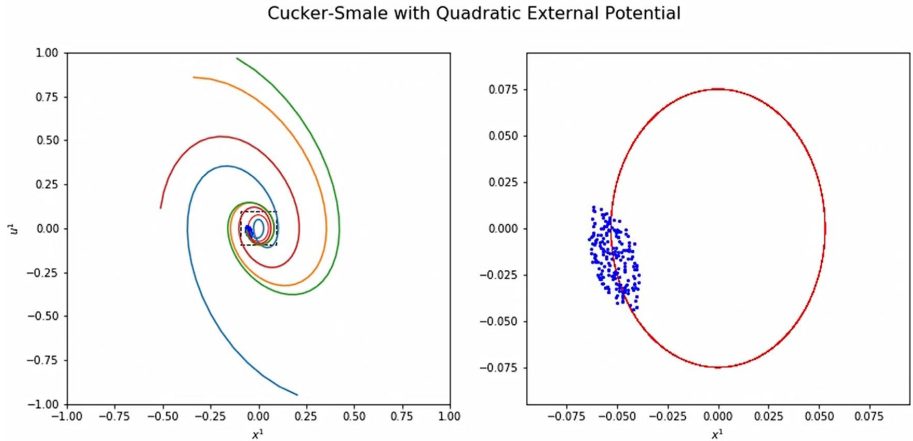}
	\caption{Flocking emerges along harmonic oscillator}\label{fig:CS+Qpotential}
\end{figure}

Flocking is dictated by the enstrophy on the right of \eqref{eq:book}, which in turn is determined by the two types of fluctuations --- the internal energy and kinetic fluctuations. We shall analyze these fluctuations in two separate cases of long-range and short-range communications kernels.

%%%%%%%%%%%%%%%%%%%%%%%%%
\section{Long-range communication kernels}\label{sec:long}
%%%%%%%%%%%%%%%%%%%%%%%%%%%
 Long range kernels maintain global communication  
so that  each  part of the crowd  with mass distribution $\rho(\bx)\dx$ communicates \emph{directly} with every other part with mass distribution $\rho(\bxp)\dxp$.  Global communication is quantified in terms of Pareto-type tail
\begin{equation}\label{eq:D}
\phi(\bx,\bxp) \gtrsim \langle |\bx-\bxp|\rangle^{-\theta}.
\end{equation}
According to \eqref{eq:book}, the decay of energy fluctuations  requires that the diameter of $\suppr$ does not spread too fast.
\begin{corollary}[{\bf Flocking with long-range kernels}]\label{cor:cor1}
Let $(\rho(t,\cdot),\bu(t,\cdot),\presure(t,\cdot))$ be a strong solution of  \eqref{eq:phi},\eqref{eqs:hydro}, driven by long-range kernel \eqref{eq:D}. There holds
\begin{equation}\label{eq:long_range}
\delE(t) \lesssim exp\, \Big\{-\amp \int^t \!\!\langle D(s)\rangle^{-\theta} {\d}s\Big\} \delE(0),  \qquad D(t):=\diam\suppr.
\end{equation}
 \end{corollary}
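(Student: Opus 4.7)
The plan is to combine the energy--fluctuation identity \eqref{eq:book} with the lower bound \eqref{eq:D} and conclude via a Gr\"onwall argument. The key observation is that both the integral defining $\delE(t)$ and the right-hand side of \eqref{eq:book} are supported in $\suppr\times\suppr$, where by definition of the diameter the pairwise distance satisfies $|\bx-\bxp|\le D(t)$. Since $r\mapsto\langle r\rangle^{-\theta}$ is monotonically decreasing for $\theta>0$, the long-range hypothesis \eqref{eq:D} yields the uniform pointwise estimate $\phi(\bx,\bxp)\gtrsim \langle D(t)\rangle^{-\theta}$ on $\suppr\times\suppr$.

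Inserting this lower bound into \eqref{eq:book} factors $\langle D(t)\rangle^{-\theta}$ out of the double integral, and what remains is precisely the integrand in the definition of $\delE(t)$, up to the normalizing prefactor $2m_0$. This produces the differential inequality
\[
\ddt\delE(t)\le -c\,\amp\,\langle D(t)\rangle^{-\theta}\,\delE(t),
\]
where $c$ combines $2m_0$ with the implicit constant in \eqref{eq:D}. A direct Gr\"onwall integration on $[0,t]$ then yields the stated exponential bound \eqref{eq:long_range}, with the absolute constants absorbed into the $\lesssim$.

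\textbf{Main obstacle.} For the corollary as stated there is essentially no analytic difficulty, since it is a short consequence of the two already-established displays \eqref{eq:book} and \eqref{eq:D}; the only care needed is to notice that the integrand vanishes off $\suppr\times\suppr$, which legalizes the pointwise replacement $\phi\gtrsim \langle D(t)\rangle^{-\theta}$. The genuinely delicate point lies downstream of this corollary: turning \eqref{eq:long_range} into unconditional flocking requires $\int^{\infty}\langle D(s)\rangle^{-\theta}\,\d s=\infty$, which demands a bootstrap feeding the decay of velocity fluctuations back into an a priori control on $D(t)$. That closure step is not part of the corollary's statement and is handled separately, typically by exploiting the conservation of momentum together with the strong-solution regularity to bound the growth of $\suppr$.
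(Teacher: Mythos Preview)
Your proposal is correct and follows precisely the route intended by the paper: the corollary is stated there as an immediate consequence of the fluctuation identity \eqref{eq:book} together with the fat-tail lower bound \eqref{eq:D}, and your argument---restricting to $\suppr\times\suppr$, using monotonicity of $\langle r\rangle^{-\theta}$ to pull out $\langle D(t)\rangle^{-\theta}$, and then applying Gr\"onwall---is exactly that. Your remark that the genuine work lies downstream (closing the bootstrap between decay of fluctuations and growth of $D(t)$) is also in line with the paper, which treats that separately in the discussion following the corollary.
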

The flocking behavior of dynamics driven by long-range kernels is determined by two features:
 (i)  the assumed `fat-tail' behavior \eqref{eq:D};  and   (ii) the spread of $\mbox{supp}\{\rho(t,\cdot)\}$. Corollary \eqref{cor:cor1} implies that 
 \begin{equation}\label{eq:Dtbound}
 \text{if} \  D(t)  \lesssim \langle t\rangle^\beta \ \text{then flocking follows for} \ \theta\beta <1.
 \end{equation}
 {\bf `Fat-tailed' kernels}. A prototype example encountered with  uniformly bounded velocity field, in which case $\langle D(t) \rangle \lesssim 2|\bu|_\infty t$ and hence  
  $\beta=1$ implies unconditional flocking for fat-tailed kernels satisfying \eqref{eq:D} with $\theta\in(0,1)$, which in turn implies fluctuations decay rate of order $\lesssim exp\{-t^{1-\theta}\}$. This is the typical scenario for \emph{mono-kinetic closure} $\presure \equiv 0$. In this case, the momentum equation \eqref{eq:hydro}${}_2$ decouples into $d$ scalar transport equations for the components of $\bu$, 
 \begin{equation}\label{eq:transport}
 (\partial_t + \bu\cdot \nabla_\bx) u_i =\int \phi(\bx,\bxp)(u'_i-u_i)\rho(t,\bxp)\dxp, \qquad \bx\in \suppr,
 \end{equation}
 each   satisfies the maximum principle e.g., \cite[Theorem 2.3]{MT2014},\cite[\$1.1]{HT2017}. In fact, in this pressure-less scenario, \eqref{eq:transport} implies  the  uniform decay of  velocity fluctuations is tied to size of $\suppr$
 \begin{align*}
 \ddt V_i(t) & \leq -\amp \langle D(t)\rangle ^{-\theta} V_i(t), \qquad
 V_i(t)=\max_{\bx,\bxp\in {\mathcal S}(t)} |u_i(t,\bx)-u_i(t,\bxp)|\\
 \ddt D(t) &\leq \max_i V_i(t).
 \end{align*}
  It follows that for the   fat-tail alignment \eqref{eq:D} with $\theta<1$, the following functional \cite{HL2009},
  $H(t):=\amp\langle D(t)\rangle^{1-\theta} + (1-\theta)\max_iV_i(t)$, is non-increasing, hence  ${\mathcal S}(t)$ is, in fact,  kept uniformly bounded for all time, 
 \begin{equation}\label{eq:Dleqdplus}
 D(t)\leq D_+,
 \end{equation}
  which in turn implies  exponential flocking $\lesssim exp\{-\amp D_+ t\}$. This flocking result for fat-tailed metric-based kernels $\phi(\bx,\bxp)=\varphi(|\bx-\bxp|)$, with mono-kinetic closure  goes back to Cucker-Smale, e.g., \cite{CS2007a, CS2007b,HT2008,HL2009,CFTV2010,MT2014} and we observe here that it extends to general fat-tailed symmetric kernels. 
  
  Another example for flocking occurs with long-range \emph{matrix-valued} kernels, corresponding to fat-tailed dynamics \eqref{eq:Dtbound} of order $\theta <\nicefrac{2}{3}$: in this case, the diameter of $\suppr$ grows no faster than  $D(t) \lesssim \langle t\rangle^\beta$ with   $\beta<\frac{2}{2-\theta}$, , \cite{ST2021}, leading to flocking decay rate of fractional order $\lesssim exp\{-t^{1-\theta\beta}\}$.

\begin{remark}[{\bf Internal energy}] 
Let $\avrho$ denote the \emph{averaged density} (recall the normalization \eqref{eq:phi})
\begin{equation}\label{eq:philong}
\avrho(t,\bx):=\int \phi(\bx,\bxp)\rho(t,\bxp)\dxp \geq m_0 \phi_-(t), \qquad \phi_-(t)=\min_{\bx,\bxp\in{\mathcal S}(t)} \phi(\bx,\bxp).
\end{equation}
We observe that the contribution of the internal energy to the decay of fluctuations in \eqref{eq:book}  admits the lower bound
\begin{equation}\label{eq:internal}
\begin{split}
\iint  \phi(\bx,\bxp)\big(e+e'\big)\dmrho
 \geq \frac{1}{m_0}\min_\bx\avrho(t,\bx)\iint \big(e+e'\big)\dmrho ,
\end{split} 
\end{equation}
  Hence  the decay of the internal energy  portion  of the fluctuations   is independent of the specifics of the closure relationship: \emph{any} non-negative internal energy is dissipated by fat-tailed kernels, $\phi_-(t)\gtrsim \langle D(t)\rangle^{-\theta}$  such that $\displaystyle \int \langle D(s)\rangle^{-\theta}{\d}s =\infty$.
\end{remark}

%%%%%%%%%%%%%%%%%%%%%%
\section{Short-range communication kernels}\label{sec:short}
%%%%%%%%%%%%%%%%%%%%%%%%%

We focus our attention on  the more realistic scenario of short-range communication kernels, and in particular, when $\phi(\bx,\bxp)$ is compactly supported in the vicinity of  diagonal $|\bx-\bxp|< R_0$. This means that alignment takes place in local neighborhoods of size $ < R_0$, which is assumed much smaller than the diameter of the ambient space $\om$. We consider the case of 
 $\per$-periodic torus $\om=\T^{\dim}$.

%%%%%%%%%%%%%%
\subsection{Spectral analysis}
%%%%%%%%%%%%%%%
We revisit the two ingredients involved in the decay rate of the energy fluctuations stated in \eqref{eq:book}.

\smallskip\noindent
(i) {\bf Internal energy}. We replace the lower bound \eqref{eq:philong} with 
$\avrho(t,\bx) \geq \rho_-(t)$ (recall the normalization \eqref{eq:phi}). The decay bound  of the internal energy portion in \eqref{eq:book}   for \emph{non-vacuous} flows then reads
\begin{equation}\label{eq:Phishort}
\iint_{\T^\dim\times\T^\dim}  \phi(\bx,\bxp)\big(e+e'\big)\dmrho 
 \geq \frac{\rho_-(t)}{m_0}\iint_{\T^\dim\times\T^\dim} \big(e+e'\big)\dmrho.
 \end{equation}

\smallskip\noindent
(ii) {\bf Kinetic energy}. It remains to bound the contribution of  the kinetic energy fluctuations to the enstrophy on the right of \eqref{eq:book}
\[
\iint_{\T^\dim\times\T^\dim} \phi(\bx,\bxp)|\bu-\bu'|^2\dmrho.
\]
Given the symmetric communication kernel $\phi(\bx,\by)=\phi(\by,\bx)$ we set the \emph{weighted Laplacian} as the Hilbert-Schmidt  operator $\Lap_\rho:L^2(\T^\dim)\rightarrow L^2(\T^\dim)$,
\[
\begin{split}
\Lap_\rho\bw(\bx):=\int_{\T^\dim}\phi(\bx,\by)\Big(\sqrt{\rho(\bxp)}\bw(\bx)-\sqrt{\rho(\bx)}\bw(\by)\Big)\sqrt{\rho(\by)}\dy.
\end{split}
\]
Let $\lambda_k(t)$ be the discrete eigenvalues of $\Lap_{\rho(t)}$ starting with the eigenpair $\lambda_1=0$ (corresponding to eigenfunction $\sqrt{\rho(t,\bx)}\bc$ where $\bc$ is any constant vector in $\R^d$).
The desired lower-bound on the  kinetic energy fluctuations part of the enstrophy
is given by the \emph{spectral gap}, $\lambda_2(\Lap_{\rho(t)})$,
\begin{equation}\label{eq:kinetic}
\begin{split}
\iint_{\T^\dim\times\T^\dim} \!\!\phi(\bx,\bxp)&|\bu-\bu'|^2\dmrho \geq \frac{\lambda_2(\Lap_{\rho(t)})}{m_0} \iint_{\T^\dim\times\T^\dim}  \!\!|\bu-\bu'|^2\dmrho.
\end{split}
\end{equation}
This corresponds to the   bound of  energy fluctuations in the  discrete case \eqref{eq:discfluc}.
The proof is outlined in the end of this section.
\begin{remark}
The  spectral gap bound \eqref{eq:kinetic}  improves the spectral bound derived in \cite[Theorem 1.1]{ST2020b} which required the                                                                                                                                                                                                                                                                                                                                                                                                                      spurious condition $\rho(t,\cdot)\gtrsim \langle t\rangle^{-1/2}$. Instead, \eqref{eq:kinetic} encodes the behavior of $\rho$ through the \emph{weighted} Laplacian $\Lap_{\rho(t)}$; its definition is inspired by the discrete weighted Lapalcian introduced in \cite[\S3]{HT2020}.
\end{remark}

Inserting \eqref{eq:Phishort} and \eqref{eq:kinetic}
into  \eqref{eq:book} yields
the following.
\begin{theorem}[{\bf Flocking with positive spectral gap}]\label{thm:main0}
Let $(\rho(t,\cdot),\bu(t,\cdot))$ be a strong solution of the hydrodynamic system \eqref{eq:phi},\eqref{eqs:hydro} with a mesoscopic pressure $\presure(t,\cdot)$, subject to non-vacuous initial data $(\rho_0>0, \bu_0,\presure_0)$. Then  we the following flocking  decay estimate holds
 \begin{equation}\label{eq:main0}
 \begin{split}
\delE(t) \leq 
 exp\,\Big\{-2\amp\int^t \min\big\{\lambda_2(\Lap_{\rho(s)}),\rho_-(\tau)\big\}{\d}s\Big\} \delE(0).
 \end{split}
 \end{equation}
 \end{theorem}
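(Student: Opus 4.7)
The proof plan is essentially an assembly of ingredients already stated in the excerpt: start from the fluctuation identity \eqref{eq:book}, feed in the two lower bounds \eqref{eq:Phishort} (internal energy) and \eqref{eq:kinetic} (kinetic energy), and close with Gronwall.

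First, I would recall the identity \eqref{eq:book} and split its right-hand side into two pieces corresponding to the $\frac{1}{2}|\bu-\bu'|^2$ and the $e+e'$ contributions to the enstrophy. For the second piece, since $\rho_0>0$ and the flow remains non-vacuous, \eqref{eq:Phishort} gives
\[
\iint \phi(\bx,\bxp)(e+e')\dmrho \;\geq\; \frac{\rho_-(t)}{m_0}\iint (e+e')\dmrho.
\]
For the first piece, the spectral gap bound \eqref{eq:kinetic} applied to the symmetric kernel $\phi$ with weight $\sqrt{\rho(t,\cdot)}$ yields
\[
\iint \phi(\bx,\bxp)|\bu-\bu'|^2\dmrho \;\geq\; \frac{\lambda_2(\Lap_{\rho(t)})}{m_0}\iint |\bu-\bu'|^2\dmrho.
\]

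Next, I would combine the two estimates using the elementary inequality $a\alpha+b\beta\geq \min\{\alpha,\beta\}(a+b)$ for non-negative $a,b,\alpha,\beta$, applied with $a=\tfrac{1}{2}\iint|\bu-\bu'|^2\dmrho$, $b=\iint(e+e')\dmrho$, $\alpha=\lambda_2(\Lap_{\rho(t)})$, $\beta=\rho_-(t)$. This produces
\[
\iint \phi\Big(\tfrac{1}{2}|\bu-\bu'|^2+e+e'\Big)\dmrho \;\geq\; \frac{\min\{\lambda_2(\Lap_{\rho(t)}),\rho_-(t)\}}{m_0}\iint\Big(\tfrac{1}{2}|\bu-\bu'|^2+e+e'\Big)\dmrho.
\]
Observe that the integral on the right is exactly $2m_0\,\delE(t)$ by the definition of the total fluctuation functional. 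Substituting this into \eqref{eq:book} gives the differential inequality
\[
\ddt \delE(t) \;\leq\; -2\amp\,\min\{\lambda_2(\Lap_{\rho(t)}),\rho_-(t)\}\,\delE(t).
\]

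Finally, Gronwall's inequality delivers the asserted bound \eqref{eq:main0}. The only genuine mathematical content lies in the two auxiliary estimates \eqref{eq:Phishort} and \eqref{eq:kinetic} — in particular, the spectral gap bound of the weighted Hilbert-Schmidt Laplacian $\Lap_\rho$, whose derivation (promised at the end of the section) is the real obstacle; once those are granted, the theorem follows by the bookkeeping above, and the minimum structure in the exponent is forced by our inability to assume a common rate for the kinetic and thermal fluctuations without a closure hypothesis on $\presure$.
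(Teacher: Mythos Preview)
Your proposal is correct and matches the paper's approach exactly: the paper simply says that inserting \eqref{eq:Phishort} and \eqref{eq:kinetic} into \eqref{eq:book} yields Theorem~\ref{thm:main0}, and your write-up is a faithful unpacking of that one-line argument, including the use of $\min\{\alpha,\beta\}$ to merge the two rates and Gronwall to close.
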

 \noindent
The energy fluctuations bound \eqref{eq:main0} is in complete analogy with the discrete bound \eqref{eq:discfluc}. In particular, if $\lambda_2(\Lap_{\rho(t)})$ and $\rho_-(t)$ have fat-tailed decay in time, then they `communicate' strong enough  alignment to imply  the flocking behavior sought in \eqref{eq:flock},
\[
\delE(t) = \int_{\T^\dim} \left[\frac{1}{2}|\bu(t,\bx)-\overline{\bu}_0|^2+e(t,\bx)\right]\rho(t,\bx)\dx \rightarrow 0,
\]
and it comes with the additional decay of the internal energy.

\begin{proof}[Proof of the spectral gap bound \eqref{eq:kinetic}]
Given the symmetric communication kernel $\phi(\bx,\by)=\phi(\by,\bx) \in L^\infty(\T^\dim\times\T^\dim)$ we set the \emph{weighted Laplacian operator}, $\Lap_\rho:=\Lambda_\rho-{\mathscr A}_\rho$ where  $\Lambda_\rho:L^2_\rho(\T^\dim)\rightarrow L^2_\rho(\T^\dim)$ and ${\mathscr A}_\rho:L^2_\rho(\T^\dim)\rightarrow L^2_\rho(\T^\dim)$ are a multiplication operator and, respectively, Hilbert-Schmidt operator on $\displaystyle L^2_\rho(\T^\dim):=\Big\{\bw  : \int_{\T^\dim} |\bw|^2\rho \dx<\infty \Big\}$,   which involve the positive weight function\footnote{In agreement with the standard convention of keeping \emph{positive} graph Laplacians} $\rho>0$
\[
\begin{split}
\Lambda_\rho\bw(\bx):=\avrho(\bx) \bw(\bx), \qquad {\mathscr A}_\rho\bw(\bx):=\sqrt{\rho(\bx)}\int_{\T^\dim} \phi(\bx,\by)\bw(\by)\sqrt{\rho(\by)}\dy.
\end{split}
\]
The Laplacian $\Lap_\rho$ is a symmetric non-negative operator in $L^2_\rho(\om)$:
\[
\begin{split}
(\Lap_\rho \sqrt{\rho}\bw,\sqrt{\rho}\bw)&=
\iint_{\T^\dim\times\T^\dim} \phi(\bx,\by)\rho(\by)\rho(\bx)|\bw(\bx)|^2\dx\dy\\
& \quad -
\iint_{\T^\dim\times\T^\dim} \sqrt{\rho(\bx)}\phi(\bx,\by)\sqrt{\rho(\by)}\big\langle \sqrt{\rho(\by)}\bw(\by),\sqrt{\rho(\bx)}\bw(\bx)\big\rangle \dx\dy\\
& = \frac{1}{2}\iint_{\T^\dim\times\T^\dim} \phi(\bx,\by)|\bw(\bx)-\bw(\by)|^2\rho(\bx)\rho(\by)\dx\dy.
\end{split}
\]
Let $(\lambda_k\geq0,\bw_k(\bx))$ be the sequence of discrete eigen-pairs of $\Lap_\rho$ starting with the eigenpair $(\lambda_1=0,\bw_1(\bx)=\sqrt{\rho(\bx)}\bc)$, where $\bc$ is any constant vector in $\R^d$
\[
\begin{split}
\Lap_\rho\bw_1&= \Lambda_\rho(\sqrt{\rho(\bx)}\bc)-{\mathscr A}_\rho(\sqrt{\rho(\bx)}\bc)\\
 & = \sqrt{\rho(\bx)}\int_{\T^\dim}\phi(\bx,\by)
\Big(\rho(\by)-\sqrt{\rho(\by)}\sqrt{\rho(\by)}\Big)\dy\times \bc =0.
\end{split}
\]
We then have
\begin{equation}\label{eq:lam2}
\begin{split}
\lambda_2(\Lap_\rho) &= \inf_{\sqrt{\rho}\bw \perp \sqrt{\rho}\bc} \frac{(\Lap_\rho \sqrt{\rho}\bw,\sqrt{\rho}\bw)}{(\sqrt{\rho}\bw,\sqrt{\rho}\bw)} \\
&= \inf \left\{ \frac{\displaystyle \frac{1}{2}\iint \phi(\bx,\by)|\bw-\bw')|^2\dmrho}{\displaystyle \frac{1}{2m_0}\iint |\bw-\bw'|^2\dmrho} \ \ : \  \ \int \rho \bw =0 \right\}.
\end{split}
\end{equation}
The desired lower-bound of the  kinetic energy fluctuations \eqref{eq:kinetic} follows.
\end{proof}

%%%%%%%%%%%%%%%%
\subsection{Metric-based communication kernels}
%%%%%%%%%%%%%%%
The main difficulty with theorem \ref{thm:main0}  is access to the spectral gap $\lambda_2(\Lap_{\rho(t)})$.
To this end, we  restrict attention to  \emph{radial communication kernel}, $\phi(\bx,\bxp)=\varphi(|\bx-\bxp|)$, over the $\per$-periodic torus $\T^{\dim}$. Short-range communication refers to 'thin tail' kernels, and in particular --- kernels with finite support, (much) smaller than the diameter of the `crowd', $\text{supp}\{\varphi(\cdot)\} < \per$, so that there is lack of global direct communication. Instead, decay of energy fluctuations (and hence flocking) persists for non-vacuous configurations quantified below. We denote 
\[
 c_\rho(t):= \frac{\rho_-(t)}{\rho_+(t)}, \qquad 
\rho_\pm(t):=\substack{{\displaystyle \max}_\bx\\ {\displaystyle \min}_\bx}\rho(t,\bx).
\]
\begin{theorem}[{\bf Flocking with short-range kernels}]\label{thm:main1}
Consider the hydrodynamic system \eqref{eqs:hydro} over the $\per$-periodic torus $\T^{\dim}$, driven by a non-negative radial communication kernel, $\phi(\bx,\bxp)=\varphi(|\bx-\bxp|)$, with unit mass
$\displaystyle \int_{{\mathbb T}^\dim} \varphi(|\bx|)\dx=1$. 
 Let $(\rho,\bu, \presure)$ be a strong solution  subject to  non-vacuous initial data $(\rho_0>0, \bu_0,\presure_0)$.  There exists a constant 
\[
  \etavar:=  1-\max_{\bk \neq \{\mathbf 0\}}\int_{{\mathbb T}^\dim} \varphi(|\bx|)\cos\big(\twopip\bk\cdot\bx\big)\dx >0
\]
such that
\begin{equation}\label{eq:lamtwo}
\lambda_2(\Lap_{\rho(t)})\geq \frac{1}{2}\etavar  c_\rho(t)\rho_-(t), 
\end{equation}
 and the following bound on the decay of energy fluctuations holds
\begin{equation}\label{eq:delE}
\delE(t)   \leq exp\left\{\displaystyle -\amp \etavar \!\int^t \!\!c_\rho(s)\rho_-(\tau){\d}s\right\} \delE(0).
\end{equation}
\end{theorem}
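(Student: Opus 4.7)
The plan is to establish the spectral-gap bound \eqref{eq:lamtwo} and then combine it with \thm{thm:main0}: since $\etavar c_\rho(t)\le 1$, the estimate $\lambda_2(\Lap_{\rho(t)})\ge\frac{1}{2}\etavar c_\rho(t)\rho_-(t)$ forces $\min\{\lambda_2(\Lap_{\rho(t)}),\rho_-(t)\}\ge\frac{1}{2}\etavar c_\rho(t)\rho_-(t)$, and feeding this into \eqref{eq:main0} yields precisely the exponential decay \eqref{eq:delE}. Thus everything reduces to proving \eqref{eq:lamtwo}.

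To prove \eqref{eq:lamtwo} I would test the Rayleigh quotient \eqref{eq:lam2} with an arbitrary admissible $\bw$ satisfying $\int_{\T^\dim}\rho\bw\dx=0$, and lower-bound the Dirichlet-type integral
\[
I_\varphi(\bw):=\iint_{\T^\dim\times\T^\dim}\varphi(|\bx-\bxp|)|\bw(\bx)-\bw(\bxp)|^2\dmrho
\]
by a multiple of $\int\rho|\bw|^2\dx$. The argument has three steps. \textbf{(i)} Pass to the unweighted kernel via the pointwise bound $\rho(\bx)\rho(\bxp)\ge\rho_-^2(t)$. \textbf{(ii)} Fourier-expand the resulting unweighted Dirichlet form on $\T^\dim$: since $\varphi$ is radial, its Fourier coefficients $\widehat{\varphi}(\bk)$ are real, with $\widehat{\varphi}({\mathbf 0})=\int\varphi=1$ and $\widehat{\varphi}(\bk)\le 1-\etavar$ for $\bk\neq{\mathbf 0}$ by the very definition of $\etavar$; Parseval then gives
\[
\iint\varphi(|\bx-\bxp|)|\bw(\bx)-\bw(\bxp)|^2\dx\dxp=2\sum_{\bk}\bigl(1-\widehat{\varphi}(\bk)\bigr)|\widehat{\bw}(\bk)|^2\ge 2\etavar\!\int_{\T^\dim}\!|\bw-\overline{\bw}|^2\dx,
\]
where $\overline{\bw}$ is the unweighted torus mean. \textbf{(iii)} Convert the unweighted $L^2$-mass of $\bw-\overline{\bw}$ into the $\rho$-weighted mass of $\bw$: the admissibility constraint yields the identity $\int\rho|\bw-\overline{\bw}|^2\dx=\int\rho|\bw|^2\dx+m_0|\overline{\bw}|^2\ge\int\rho|\bw|^2\dx$, which combined with the trivial $\int|\bw-\overline{\bw}|^2\dx\ge\rho_+^{-1}(t)\int\rho|\bw-\overline{\bw}|^2\dx$ gives $\int|\bw-\overline{\bw}|^2\dx\ge\rho_+^{-1}(t)\int\rho|\bw|^2\dx$. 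Chaining (i)--(iii) produces $I_\varphi(\bw)\ge 2\etavar c_\rho(t)\rho_-(t)\int\rho|\bw|^2\dx$, and \eqref{eq:lam2} delivers the claimed spectral-gap bound.

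The delicate point is step (iii): the Fourier spectral gap is naturally formulated with respect to the \emph{unweighted} mean $\overline{\bw}$, whereas the admissible test functions in \eqref{eq:lam2} satisfy $\rho$-\emph{weighted} orthogonality $\int\rho\bw=0$; bridging these two means is exactly what generates the ratio $c_\rho=\rho_-/\rho_+$ in \eqref{eq:lamtwo}. The strict positivity $\etavar>0$ follows from the short-range hypothesis: because $\mathrm{supp}\,\varphi\subsetneq\T^\dim$, one has $\widehat{\varphi}(\bk)<\widehat{\varphi}({\mathbf 0})=1$ for every $\bk\neq{\mathbf 0}$, and since the maximum in the definition of $\etavar$ is taken over the discrete set $\Z^\dim\setminus\{{\mathbf 0}\}$ it is attained and strictly less than $1$.
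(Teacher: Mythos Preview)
Your proposal is correct and follows essentially the same route as the paper's proof: strip the $\rho$-weights via $\rho\rho'\ge\rho_-^2$, apply the Fourier-analytic Poincar\'e inequality \eqref{eq:Fbound} on the unweighted form to extract the constant $\etavar$, then reinsert the weights using $1\ge\rho/\rho_+$ together with the constraint $\int\rho\bw=0$, and finally invoke \thm{thm:main0}. Your execution of step~(iii) via the identity $\int\rho|\bw-\overline{\bw}|^2=\int\rho|\bw|^2+m_0|\overline{\bw}|^2$ is a clean variant of the paper's use of the minimality $\int\rho|\bu-\bc|^2\ge\int\rho|\bu-\overline{\bu}_0|^2$ and in fact yields $\lambda_2\ge\etavar c_\rho\rho_-$, a factor of $2$ sharper than \eqref{eq:lamtwo}; otherwise the arguments coincide.
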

\begin{remark}[{\bf Optimality of the spectral gap bound?}]\label{rem:crhot}
The obvious bound 
\[
\lambda_2(L_{\rho(t)}) \geq m_0\cdot\min \phi(\bx,\bxp),
\]
 implies that when $\phi$ is a long-range communication kernel, namely --- when \eqref{eq:D} holds with $\theta<1$, then it induces an \emph{unconditional flocking}.
The question of flocking for short-range kernels is more subtle:  theorem \ref{thm:main1} shifts the burden of proving flocking in this case to a question of non-vacuous  bounded density, $\rho_-(t) \gtrsim \langle t\rangle^{-1/2}$. 
We raise the question whether an improved bound  of the spectral gap holds --- independent of the aspect ratio $c_\rho$, 
$\lambda_2(\Lap_{\rho(t)}) \gtrsim \rho_-(t)$: this would imply flocking for `fat-tailed' density such that $\displaystyle \rho_-(t) \gtrsim \langle t\rangle^{-1}$.
\end{remark}

\begin{proof}[Proof of theorem \ref{thm:main1}]
We begin with the following  Poincar\'{e} inequality, corresponding to the   bound of discrete energy fluctuations in \eqref{eq:disfluc}: for all $\per$-periodic $\bw\in L^2({\mathbb T}^\dim)$  there holds
\begin{equation}\label{eq:Fbound}
\iint_{\T^\dim\times\T^\dim}\varphi(|\bx-\bxp|)|\bw(\bx)-\bw(\bxp)|^2\dx\dxp
\geq\frac{\etavar}{(2\pi)^d}\iint_{\T^\dim\times\T^\dim} |\bw(\bx)-\bw(\bxp)|^2\dx\dxp.
\end{equation}
Indeed, expressed in terms of  the   Fourier expansion 
\[
\bw(\bx)=\ck\sum_\bk \widehat{\bw}(\bk)e^{i\twopip\bk\cdot\bx}, \qquad \widehat{\bw}(\bk) =\ck \int_{{\mathbb T}^\dim}\bw(\bx)e^{-i\twopip\bk\cdot\bx}\dx,
\]
the integral on the right of \eqref{eq:Fbound} amounts  to
\begin{equation}\label{eq:RHS}
\begin{split}
\frac{1}{2}\iint_{\T^\dim\times\T^\dim} |\bw(\bx)-\bw(\bxp)|^2\dx\dxp &=\hk\!\int_{{\mathbb T}^\dim}|\bw(\bx)|^2\dx-\Big| \int_{{\mathbb T}^\dim}\bw(\bx)\dx\Big|^2 \\
 &= \hk\sum_{\bk \neq {\mathbf 0}}|\widehat{\bw}(\bk)|^2.
 \end{split}
\end{equation}
 Computing the convolution terms on the left of \eqref{eq:Fbound}, 
 $\widehat{\langle \varphi*\bw\rangle }(\bk)= \dk\langle \widehat{\varphi}(\bk),\widehat{\bw}(\bk)\rangle$, and using the assumed unit mass $\widehat{\varphi}(0)=\one\ck$,  yields \eqref{eq:Fbound}:
\begin{equation}\label{eq:LHS}
\begin{split}
\frac{1}{2}\iint_{\T^\dim\times\T^\dim} &\varphi(|\bx-\bxp|)|\bw-\bw'|^2\dx\dxp  \\
& = \int_{{\mathbb T}^\dim} |\bw(\bx)|^2\dx - {\mathrm Re}\int_{{\mathbb T}^\dim}\big\langle \bw(\bx), (\varphi*\bw)(\bx) \big\rangle \dx\\
& =
\sum_{\bk \neq {\mathbf 0}} \left(1- {\mathrm Re}\int_{{\mathbb T}^\dim} \varphi(|\bx|)e^{i\twopip\bk\cdot\bx} \dx\right) |\widehat{\bw}(\bk)|^2 \\
& \geq   \left(1-\max_{\bk \neq {\mathbf 0}}\int_{{\mathbb T}^\dim} \varphi(\bx) \cos(\twopip\bk\cdot\bx)\dx\right) \sum_{\bk \neq{\mathbf 0}}|\widehat{\bw}(\bk)|^2.
\end{split}
\end{equation}
Using \eqref{eq:Fbound} we compute the lower-bound
\[
\begin{split}
\iint_{\T^\dim\times\T^\dim}&\varphi(|\bx-\bxp|)|\bu-\bu'|^2\dmrho 
   \\
  &  \geq \rho_-^2(t) \iint\varphi(|\bx-\bxp|)|\bu-\bu'|^2\dx\dxp 
   \geq \frac{\etavar}{(2\pi)^\dim} \rho_-^2(t)\iint|\bu-\bu'|^2\dx\dxp \\
     & \geq \frac{\etavar}{(2\pi)^\dim} \frac{\rho_-^2(t)}{\rho_+(t)}(2\pi)^\dim\iint|\bu-\bu'|^2\rho(t,\bx)\dx\dxp
    \geq \etavar \frac{\rho_-^2(t)}{\rho_+(t)}\frac{1}{m_0}\iint|\bu-\overline{\bu}_0|^2\dmrho\\
   & = \etavar \frac{\rho_-^2(t)}{\rho_+(t)}\frac{1}{2m_0}\iint_{\T^\dim\times\T^\dim}|\bu-\bu'|^2\dmrho.
 \end{split}
\]
(Recall $\displaystyle \overline{\bu}=\frac{\overline{\rho\bu}}{m_0}$ so the fourth inequality follows from $\displaystyle \int|\bu-{\mathbf c}|^2\rho \geq \int |\bu-\overline{\bu}|^2\rho$ for all constant vectors ${\mathbf c}$).
We now deduce \eqref{eq:lamtwo} from the optimaility of $\lambda_2(\Lap_\rho)$ in \eqref{eq:lam2}; observe that the eigenspace associated with $\lambda_2(\Lap_{\rho(t)})$ remains uniformly bounded away from the eigenspace of constants associated with $\lambda_1(\Lap_{\rho(t)})=0$.
Moreover, 
$\displaystyle \frac{1}{2}\etavar c_\rho\rho_-(t)  \leq \rho_-(t)
 \leq \avrho(t)$
 and the \eqref{eq:delE} follows from theorem \ref{thm:main0}.
\end{proof}

Let $\overline{m}_0$ denote the average mass $\displaystyle \overline{m}_0:=\frac{m_0}{\hk}$. Theorem \ref{thm:main1} implies that  as long density fluctuations remains below a specified  threshold; there exists a constant $c<1$ such that
\begin{equation}\label{eq:denvar}
 \rho_+(t)-\rho_-(t) \leq (1-c) \overline{m}_0, \qquad c<1;
 \end{equation}
then $c_\rho(t) \geq c$ and hence $\displaystyle \rho_-(t) \geq \overline{m}_0c$. We end up with the exponential flocking bound
  \begin{equation}\label{eq:main1}
 \delE(t)   \leq e^{- \delta\etavar  t} \delE(0), \qquad 
 \delta:=\amp\overline{m}_0c^2, \ \ 0<c<1.
 \end{equation}
 This echos a similar result for \emph{first-order} consensus dynamics encoded in terms of $\{\bx_i\} \leadsto \rho$: if the variation of the density remains below a specified $\varphi$-dependent  threshold then smooth solutions of   approach a consensus, \cite{GPY2017}. In both cases, the threshold, quantified  in terms of  the Fourier transform of $\varphi$, dictates flocking/consensus for short-range kernels. We close the section with two examples.
  
%%%%%%%%%%%%
\subsection{Examples}\mbox{ }
%%%%%%%%%%%

\smallskip\noindent
{\bf Example 1}. Consider the 1D dynamics over the $2\pi$-torus $\T$ driven by the communication kernel\footnote{$\mathds{1}$ denotes the  characteristic function of the unit ball $\mathds{1}(r):=\left\{\begin{array}{ll} 1 & 0\leq r\leq 1\\
0 & r>1.\end{array}\right.$}
$\phi(x,\xp)=\hf\mathds{1}(|x-\xp|)$. The corresponding  threshold 
 is given by $\displaystyle \etavar=1-\sin(1) \sim 0.158$. It follows that if the density has a finite variation so that \eqref{eq:denvar} holds, then  the 1D CS dynamics \eqref{eqs:hydro} admits exponentially converging flocking $\lesssim e^{-0.158\delta t}$. 
  In fact, in a recent work of Dietert and  Shvydkoy \cite[Theorem 1.3]{DS2020} it was shown  that in the special  1D case,  \emph{any}  discrete CS dynamics \eqref{eq:CS} with non-trivial communication kernel admits flocking  rate  of order $\lesssim \left(\nicefrac{\ln(t)}{t}\right)^{1/5}$.
Here, by restricting attention to   large-crowd dynamics with  slowly varying density, we  improve the flocking result  to exponential rate.

\smallskip\noindent
{\bf Example 2}. Consider the 2D dynamics over the $2\pi$-periodic torus $\T^2$ driven by the communication kernel $\phi(\bx,\bxp)=\frac{1}{\pi}\mathds{1}(|\bx-\bxp|)$. The Fourier coefficients of the radial $\phi$ are given by, \cite{PST1993}, $\displaystyle 2\pi \widehat{\varphi}(\bk)= \frac{2}{|\bk|}J_1(|\bk|)$ and hence 
\[
 \etavar=1-\frac{J_1(1)}{\pi} \sim 0.86.
\]
 It follows that if the density variation remains with the range 
\eqref{eq:denvar} then  the 2D CS dynamics \eqref{eqs:hydro} admits exponentially converging flocking $\lesssim e^{-0.86 \delta t}$.

%%%%%%%%%%%%%%%%%
\section{Global smooth solutions: critical thresholds}
%%%%%%%%%%%%%%%%%
 The hydrodynamics \eqref{eqs:hydro} is driven by two competing mechanisms: a generic effect  in  Eulerian dynamics of \emph{steepening local fluctuations} which may lead to finite-time blow-up when $\displaystyle \lim_{(t,\bx) \uparrow (t_c,\bx_c)}\nabla_\bx\cdot\bu(t,\bx)=-\infty$,  and  alignment which prevents the formation of  shock discontinuities, 
 \begin{equation}\label{eq:C0}
 \nabla_\bx \cdot \bu(t,\cdot) \geq -C_0 >-\infty,
 \end{equation}
 as $|\bu-\bu'|^2\dmrho \stackrel{t\rightarrow \infty}\longrightarrow 0$. The outcome of this competition determines whether \eqref{eqs:hydro} admits  strong solutions sought in theorems \ref{thm:main0} and \ref{thm:main1}. 
 The global existence results  available in current literature  are almost exclusively devoted to mono-kinetic closure $\presure \equiv 0$, 
\begin{equation}\label{eq:noP}
 \left\{\ \ \begin{split}
 \rho_t + \n_\bx \cdot (\rho \bu) & = 0, \\
(\rho\bu)_t + \nabla_\bx\cdot (\rho\bu \otimes \bu ) &=\amp\int_{\R^d} \phi(\bx,\by)\big((\bu(t,\by)-\bu(t,\bx)\big)\rho(t,\by)\rho(t,\bx)\dy.
\end{split} \right. 
 \end{equation}
This is the default closure relation used in much of the `macroscopic' literature, e.g.,  \cite{BDT2017,BDT2019} and the references therein.

 \medskip\noindent
 Although the repulsive forcing of pressure is missing, system \eqref{eq:noP} is still driven by a competition between nonlinear advection and alignment.
Indeed, the alignment hydrodynamics  with or without pressure, \eqref{eqs:hydro} or \eqref{eq:noP},  may form finite time  
shock-discontinuities coupled with the emergence of  Dirac mass which require their  interpretation  as  \emph{weak solutions}, e.g., \cite{KMT2013}. A proper notion of  weak solutions which enforces uniqueness within an admissible class of solutions is still missing. A theory of \emph{dissipative weak solutions}
was developed in \cite{CFGS2017} in which the authors establish a weak-strong 
uniqueness principle.  
 Existence of global strong solutions, on the other hand,  depend on certain \emph{critical thresholds} in the space of initial configurations.  
 
 %%%%%%%%%%%%%%
\subsection{Critical thresholds}\label{sec:CT}
%%%%%%%%%%%%%%
To motivate our choice for initial thresholds, we turn to  discuss the \emph{thermodynamics} of the general alignment system \eqref{eqs:hydro}. 
 The entities governed by collective description \eqref{eq:CS}  are fundamentally different then physical particles. While physical particles are driven by forces induced by the
environment of other particles, the  `social particles' we consider here are driven by \emph{probing}
the environment --- living organisms, human interactions and sensor-based agents have senses and sensors
with which they actively probe the environment. In particular, such social agents 
receive energy from the outside, thus forming  thermo-dynamically open systems.  This is particularly apparent in self-organization of biological agents that receive energy from the outside, , thus forming  thermo-dynamically open systems, e.g., \cite{Kar2008}.
Accordingly, the meso-scopic description for flocking cannot not be expected to provide a self-contained  notion of thermodynamic closure sought in \eqref{eqs:hydro}, and as such, there is no universal Maxwellian for thermal equilibrium. As noted in \cite[\S1.1]{VZ2012},
`The source of energy making the motion possible ... are not relevant'.
Nevertheless, lack of thermal equilibrium  in the form of certain closure \emph{equalities} can be substituted with certain \emph{inequalities} which are compatible with the decay of internal energy fluctuations in \eqref{eq:book}.  
To this end, we  trace the separate contributions of the kinetic and internal energies.  Multiply the momentum \eqref{eq:hydro}${}_2$ by $\bu$, we find
\[
\begin{split}
\partial_t (\rho \eK)  + \nabla_\bx\cdot \Big(\rho \bu \frac{|\bu|^2}{2} +\presure\bu\Big)  + \sum_{i,j}P_{ij}\frac{\partial u_i}{\partial x_j}  
 =  - \amp \int \phi(\bx,\bxp) \big(|\bu|^2 - \bu\cdot \bu'\big)\rho\rho'\dxp. 
\end{split}
\]
Subtract the portion of kinetic energy   from the balance of total energy in \eqref{eq:E} we find the dynamics for the internal energy,
expressed in terms of the average density \eqref{eq:philong}, $\avrho=\int \phi(\bx,\bxp)\rho(t,\bxp)\dxp$ and the $d\times d$ velocity gradient matrix $\displaystyle \nablabu:=\Big\{\frac{\partial u_i}{\partial x_j}\Big\}$,
\[
\begin{split}
\partial_t (\rho e) + \div (\bu\rho e+{\mathbf q}) =  -\textnormal{trace}\Big(\presure\nablabu\Big)
  -2\amp \rho e\avrho(t,\bx). 
\end{split}
\]
Since $\displaystyle \textnormal{trace}(\presure)=\int |\bv-\bu|^2f(\bv)\dv= 2\rho e>0$,
we can rewrite the equation governing the internal energy
%\begin{subequations}\label{eqs:re-1}
\begin{equation}\label{eq:re-1}
\begin{split}
\partial_t (\rho e) + \div (\bu\rho e+{\mathbf q}) 
 = -2\jnd\rho e 
\end{split}
\end{equation}
where $\jnd=\jnd(t,\bx)$ is  given in terms of the normalized pressure $\displaystyle \overline{\presure}= \frac{\presure}{\textnormal{trace}\left(\presure\right)}$
\begin{equation}
\jnd(t,\bx):=  \textnormal{trace}(\overline{\presure}\nablabu)(t,\bx)+ \amp \avrho(t,\bx),\qquad \overline{\presure}= \frac{\presure}{\textnormal{trace}\left(\presure\right)}.
\end{equation}

We do \underline{not} enforce  any specific form for  closure  of the internal energy.  Instead, we explore the flocking dynamics  subject to a rather general set thermodynamic configurations  with the minimal assumption, in agreement with \eqref{eq:book}, that the total amount of internal energy is non-increasing.
Integration yields
\[
\int \rho e(t,\bx)\dx \leq \textnormal{exp}\left\{-2\int^t \jnd_-(s)\text{d}s\right\}\int (\rho e)_0(\bx)\dx , \qquad \jnd_-(t):=\min_{\bx} \, \jnd(t,\bx). 
\]
 Thus, a non-increasing total energy is tied to the inequality $\jnd_-(t) \geq 0$ (and in fact, if there is no heat flux, ${\mathbf q}\equiv 0$, then 
 \eqref{eq:re-1} would yield a \emph{uniform} decay  of  internal energy in thsi case).
   A simple exercise shows that since  $\overline{\presure}$ is a symmetric positive definite matrix with trace $1$, then 
 $\textnormal{trace}(\overline{\presure}{\mathbb M}) \geq \lambda_{\textnormal{min}}({\mathbb M}_S)$ for any matrix ${\mathbb M}$ with symmetric part ${\mathbb M}_S:=\frac{1}{2}({\mathbb M}+{\mathbb M}^\top)$. In particular we have the following lower-bound in terms of the symmetric gradient $\nablaS \bu$
 \begin{equation}\label{eq:btrace}
  \textnormal{trace}\Big(\overline{\presure}\nablabu\Big) \geq \lambda_{\textnormal{min}}(\nablaS \bu),\qquad \nablaS \bu:=\frac{1}{2}\left(\partial_j u_i+\partial_i u_j\right).
  \end{equation} 
 We therefore postulate the following critical threshold requirement: there exists $\eta_c\geq 0$ such that
 \begin{equation}\label{eq:CT}
\eta(\rho,\bu)(t,\bx):= \lambda_{\textnormal{min}}(\nablaS \bu)(t,\bx) + \amp \avrho(t,\bx) \geq \eta_c, \qquad \eta_c\geq0.
 \end{equation}
 Observe that the threshold \eqref{eq:CT} is independent of the thermodynamic state of the system and it will guarantee the decay $\displaystyle \int \rho e(t,\bx)\dx \lesssim e^{-2\eta_ct}$. The key question is whether  such threshold persists in time. 
 
 At this point it instructive to compare \eqref{eq:CT} with the known results of  global regularity in dimension $d=1,2$.
A global smooth solution in the 1D case exists  if and only if the initial configuration satisfies the threshold $u'_0(x)+\amp\varphi*\rho_0(x)\geq 0$, \cite{CCTT2016,TT2014,Les2020} and the extension to \emph{uni-directional} flows is found in \cite{LS2019};  this corresponds to \eqref{eq:CT}${}_{t=0}$ with $\eta_c=0$. A sufficient  threshold for 2D regularity was derived in \cite{HT2017}. It requires a lower bound on the initial divergence \emph{and} an upper-bound on the spectral gap (recall \eqref{eq:Dleqdplus})
\[
\begin{split}
\nabla_\bx\cdot \bu_0 + \amp\varphi*\rho_0 &>0,\\
(\lambda_2-\lambda_1)(\nablaS \bu_0) & \leq \delta_0, \qquad \delta_0=:\frac{1}{2}m_0\phi(D_+),
\end{split}
\] 
which imply that \eqref{eq:CT}${}_{t=0}$ holds with 
$\eta_c=\frac{1}{2}(\min_\bx \varphi*\rho_0-\delta_0)$.
 Existence of strong solutions  in $d\geq 3$-dimensions  for  `small data'  can be found in \cite{HKK2014,Sh2019}).
The next result settles the open question of existence of strong solutions in  
$d\geq 3$ dimensions.

 \begin{theorem}[{\bf Existence of global strong  solutions for multiD Euler alignment system} \cite{Tad2021}]\label{thm:main2}
 Consider the Euler alignment system \eqref{eq:noP} subject to non-vacuous initial data, $(\rho_0,\bu_0)\in H^m\times H^{m+1}$  with initial velocity of finite variation\footnote{For the specific value of the constant $C_\phi$ see \cite[(2.4)]{HT2017}} $\max_\bx|\bu'_0-\bu_0| \leq C_\phi$.  If the  initial conditions satisfy the threshold condition  
 \[
 \lambda_{\textnormal{min}}(\nablaS \bu_0)(\bx) + \amp \avrhoz(\bx) \geq \eta_c, \qquad \eta_c=\frac{1}{2}(\rho_0)_->0,
 \]
 then this threshold persists in time,
 \[
 \lambda_{\textnormal{min}}(\nablaS \bu)(t,\bx) + \amp \avrho(t,\bx) \geq \eta_c, \qquad t>0,
 \]
  and \eqref{eq:noP} admits global smooth solution
 $(\rho,\bu)\in C([0,T]; H^m\times H^{m+1})$.
 \end{theorem}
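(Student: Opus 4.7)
The proof is a continuation argument: local existence of strong solutions for \eqref{eq:noP} in $H^m\times H^{m+1}$ is standard (symmetric hyperbolic system with a smoothing non-local forcing), and it extends globally once one proves an a priori bound on $\|\nabla\bu(t,\cdot)\|_{L^\infty}$ together with a non-vacuous density. I will run a bootstrap on three coupled quantities on any time interval $[0,T]$: (i) the threshold $\eta(t,\bx):=\lambda_{\textnormal{min}}(\nablaS\bu)+\amp\,\avrho\geq\eta_c$; (ii) a non-vacuous lower bound of the form $\rho_-(t)\geq (\rho_0)_-$; and (iii) a smallness bound on the velocity spread $V(t):=\max_{\bx,\bxp}|\bu(t,\bx)-\bu(t,\bxp)|\leq C_\phi$. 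These three are mutually reinforcing: (i) forces $\nabla\!\cdot\!\bu\geq d(\eta_c-\amp\,\avrho)$ from below, which through $D_t\rho=-\rho\,\nabla\!\cdot\!\bu$ propagates (ii); (ii) feeds the flocking estimate \eqref{eq:delE} of \thm{thm:main1} to produce decay of $V(t)$, enforcing (iii); and (iii) keeps the nonlocal commutator in the $\eta$-equation subdominant, allowing propagation of (i).

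\textbf{Reduction to a scalar Riccati for $\eta$.} Differentiating \eqref{eq:noP}${}_2$ yields, with $D_t:=\partial_t+\bu\cdot\nabla_\bx$ and $M:=\nablabu$, the matrix Riccati equation
\[
D_t M + M^2 = -\amp(\avrho)M + \amp R,\qquad R_{ij}(t,\bx):=\int\partial_{x_i}\phi(\bx,\by)\big(u_j(\by)-u_j(\bx)\big)\rho(\by)\dy.
\]
Splitting $M=M_S+M_A$ into symmetric and antisymmetric parts and using that the symmetric part of $M^2$ equals $M_S^2+M_A^2$, evaluation against a unit eigenvector $\xi$ of $e:=\lambda_{\textnormal{min}}(M_S)$, together with $\langle\xi,M_A^2\xi\rangle=-|M_A\xi|^2\leq 0$, gives
\[
D_t e \geq -e^2 - \amp(\avrho)e + \amp\,\langle\xi,R_S\xi\rangle.
\]
The symmetry of $\phi$ combined with $\rho_t+\nabla\!\cdot\!(\rho\bu)=0$ yields $D_t(\avrho)=-\textnormal{trace}(R)$. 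Adding and using the algebraic identity $-e^2-\amp(\avrho)e=-\eta\,e=-\eta^2+\amp(\avrho)\eta$, one lands on the scalar Riccati inequality
\[
D_t\eta \;\geq\; \eta\big(\amp\,\avrho-\eta\big) + \amp\,\mathcal{R},\qquad \mathcal{R}:=\langle\xi,R_S\xi\rangle-\textnormal{trace}(R).
\]
The key feature of the nonlocal remainder is that $\mathcal{R}$ vanishes when $\bu$ is constant, so $|\mathcal{R}|\leq c_\phi\,\rho_+(t)\,V(t)$ for some $c_\phi$ depending only on $\|\nabla\phi\|_\infty$ and the support of $\phi$.

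\textbf{Threshold persistence --- the crux.} On the bootstrap interval, (ii) gives $\amp\,\avrho\geq\amp\rho_-\geq 2\amp\,\eta_c$, so the dissipative term $\eta(\amp\,\avrho-\eta)$ is strictly positive whenever $\eta\leq\eta_c<\amp\,\avrho$. Consider a hypothetical first touching time: $t_*=\inf\{t:\min_\bx\eta(t,\bx)=\eta_c\}$, attained at some $\bx_*$. At $(t_*,\bx_*)$, $\nabla_\bx\eta=0$ and $\partial_t\eta\leq 0$, hence $D_t\eta(t_*,\bx_*)\leq 0$; on the other hand, the Riccati inequality gives
\[
D_t\eta(t_*,\bx_*)\;\geq\;\eta_c\big(\amp\,\avrho(t_*,\bx_*)-\eta_c\big)+\amp\,\mathcal{R}(t_*,\bx_*)\;\geq\;\eta_c\big(2\amp\,\eta_c-\eta_c\big)-\amp c_\phi \rho_+ V(t_*).
\]
Choosing $C_\phi$ small enough so that the commutator contribution is dominated by the dissipative gap (this is precisely where the initial smallness $V(0)\leq C_\phi$ and the flocking decay \eqref{eq:delE} enter) produces $D_t\eta(t_*,\bx_*)>0$, contradicting first touching and preserving (i). The main obstacle is exactly this coupling: $\mathcal{R}$ has no sign and is cubic in $(\rho,\bu,\nabla\phi)$, so its control requires simultaneously (ii) and (iii), which in turn require (i); disentangling this circularity through a careful calibration of the smallness constants ($\eta_c$ scales linearly with $(\rho_0)_-$, $C_\phi$ scales with $\eta_c^2/(\amp\rho_+\|\nabla\phi\|_\infty)$) is the delicate part.

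\textbf{Closing to global regularity.} Once (i)--(iii) are shown to propagate on any $[0,T]$, (i) combined with (ii) bounds $\lambda_{\textnormal{min}}(\nablaS\bu)$ from below and the trace $\nabla\!\cdot\!\bu$ from below, yielding a two-sided bound on $\rho$. Re-running the full matrix Riccati of Step 2 with the now-bounded nonlocal forcing gives a Gronwall bound on $\|M(t,\cdot)\|_{L^\infty}$, and standard commutator/energy estimates on \eqref{eq:noP} in $H^m\times H^{m+1}$ close by Gronwall, extending the solution to $[0,T]$ for arbitrary $T$.
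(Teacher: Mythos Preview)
The paper does not give a self-contained proof of this theorem; it cites \cite{Tad2021} (in preparation) and only records that, \emph{once} the threshold $\eta\geq\eta_c$ is known to persist, the divergence bound $\nabla_\bx\cdot\bu\geq d(\eta_c-\amp\,\avrho)$ follows and standard energy estimates close. Your scalar Riccati derivation for $\eta$ is correct --- the symmetric-part splitting, the sign of $\langle\xi,M_A^2\xi\rangle$, the identity $D_t(\avrho)=-\textnormal{trace}(R)$ for translation-invariant $\phi$, and the algebra $-e^2-\amp(\avrho)e=\eta(\amp\,\avrho-\eta)-\eta^2+\eta\cdot\amp\,\avrho$ all check out --- and this is the natural multiD extension of the 2D mechanism in \cite{HT2017}, so the architecture matches what the paper sketches.

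The genuine gap is in your bootstrap implication (i)$\Rightarrow$(ii). A \emph{lower} bound on $\nabla_\bx\cdot\bu$, fed into $D_t\rho=-\rho\,\nabla_\bx\cdot\bu$, produces an \emph{upper} bound on $\rho$ along characteristics, not the lower bound $\rho_-(t)\geq(\rho_0)_-$ you assert; indeed (ii) fails at $t=0^+$ unless $\nabla_\bx\cdot\bu_0\leq 0$ at the minimum of $\rho_0$, which nothing in your hypotheses forces. Keeping $\rho$ away from vacuum requires an \emph{upper} bound on $\nabla_\bx\cdot\bu$, hence control of $\lambda_{\max}(\nablaS\bu)$ from above, which a threshold on $\lambda_{\min}$ alone does not supply. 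In \cite{HT2017} this is exactly why a second condition on the spectral gap $(\lambda_2-\lambda_1)(\nablaS\bu_0)$ is imposed; without an analogous two-sided eigenvalue control, your loop (i)$\Rightarrow$(ii)$\Rightarrow$(iii)$\Rightarrow$(i) does not close. A secondary issue: the step (ii)$\Rightarrow$(iii) invokes \eqref{eq:delE}, which is an $L^2_\rho$ bound on $\delE$, whereas your remainder estimate $|\mathcal{R}|\leq c_\phi\rho_+ V(t)$ requires the $L^\infty$ oscillation $V(t)$; bridging $L^2$ to $L^\infty$ decay needs either a maximum-principle argument on \eqref{eq:transport} --- which hinges on $\min_{\bx,\bxp\in{\mathcal S}}\phi>0$ and is therefore a long-range mechanism --- or the very Sobolev control you are trying to establish.
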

 The main feature here is that the initial threshold, $\eta(\rho_0,\bu_0)\geq \eta_c$, forms an invariant region in $(\rho,\bu)$-configuration space,
  \[
\eta(\rho_0,\bu_0)(\bx) \geq \eta_c \ \leadsto \ \eta(\rho,\bu)(t,\bx)\geq \eta_c,
 \]
 and therefore
 \[
 \nabla_\bx \cdot \bu(t,\bx) = \sum_\lambda \lambda(\nablaS \bu) \geq d\lambda_{\textnormal{min}}(\nablaS \bu)(t,\bx) 
 \geq  d \big(\eta(\rho,\bu)(t,\bx) -\amp \avrho(t,\bx)\big).
 \] 
It follows that \eqref{eq:C0} holds with $\displaystyle C_0=d\big(\amp \avrhoz-\eta_c\big)$ which in turn implies the Sobolev regularity of $\big(\rho(t,\cdot),\bu(t,\cdot)\big)$ by standard energy estimates. 

\medskip\noindent
{\bf Singular kernels}. We conclude by mentioning  existence results for the important class of singular kernels $\varphi_\beta(r):=r^{-\beta}$ with $\beta<d+2$: in this case, the  communication framework emphasizes  short-range  interactions  over long-range interactions, yet their global support still reflects global communication.
The regularity for 1D weakly singular kernels,  $\varphi_\beta$  with $\beta<1$ was  studied in \cite{Pes2015}, and for 1D strongly singular kernels with $1\leq \beta <3$ in \cite{ST2017a,ST2017b,ST2018a,DKRT2018}. In the latter case, alignment is structured as   fractional diffusion which was shown, at least in the one-dimensional case, to enforce \emph{unconditional} flocking behavior, independent of any initial threshold.  A typical result asserts that 
  \eqref{eq:noP} with strongly singular kernel, $\varphi_{1+\alpha}$ with $0<\alpha<2$ on $\T$, any non-vacuous initial data evolves into  a unique global solution,  $(\rho,u)\in L^\infty([0,\infty);  H^{s +\alpha} \times H^{s+1}), \ s\geq 3$, which converges  to a flocking traveling wave,
\[
	\|u(t,\cdot)-\overline{u}_0\|_{H^s} +\|\rho(t,\cdot) - \rho_\infty(\cdot-t\overline{u}_0) \|_{H^{s-1}} \lesssim  e^{-\eta t}, \quad t>0.
\]
Existence of strong solutions for  multiD problems with strongly singular kernels $\varphi(r)=r^{-(d+\alpha)}, \ 1<\alpha<2$  in $d\geq 3$-dimensions is open, except for  `small data' results  for   small initial data result  \cite{Sh2019} for
H\"older spaces, $|\bu_0-\bu_\infty|_\infty \lesssim (1+\|\rho_0\|_{W^{3,\infty}}+\|\bu_0\|_{W^{3,\infty}})^{-d}$ with $2/3<\alpha<3/2$, and in \cite{DMPW2019} for small Besov data 
\[
\|\bu_0\|_{B_{d,1}^{2-\alpha}} + \|\rho_0-1\|_{B_{d,1}^1} \leq \epsilon, \qquad  \alpha \in (1,2).
\]
There are fewer results on the existence of strong solutions  with  \emph{short range} interactions, in particular,  compactly supported $\varphi$'s and even the strongly singular kernels, $\varphi_\beta$ with $d< \beta< d+2$, exhibit   thinner tails than those sought for long-range communication \eqref{eq:long_range}. We mention the 1D dynamics driven by short-range topological-based kernels \cite{ST2020b}, 
\[
\begin{split}
 \phi(\bx,\bxp)&= \frac{{\mathds 1}_{R_0}(|\bx-\bxp|)}{|\bx-\bxp|^{\beta-\gamma}}\times\frac{1}{\mu^\gamma_\rho(\bx,\bxp)}, \qquad   0<\gamma< \beta,
 \end{split}
 \]
where $\displaystyle \mu_\rho(\bx,\bxp):= \left(\int_{{\mathcal C}(\bx,\bxp)}\rho(t,\bz)\dz\right)^{1/d}$ is the re-scaled mass in a communication region
${\mathcal C}(\bx,\bxp)$ enclosed between $\bx$ and $\bxp$.

%%%%%%%%%%%%%%%%%%%%%%%%%%%%%%%%%%%


\begin{thebibliography}{10}

\bibitem[Alb2019]{Alb2019}
G. Albi, N. Bellomo, L. Fermo, S.-Y. Ha, J. Kim, L. Pareschi, D. Poyato, and J. Soler,
\newblock  Vehicular traffic, crowds, and
swarms: From kinetic theory and multiscale methods to applications and research perspectives,
\newblock  Math. Models Methods Appl. Sci., 29(10) (2019) 1901-2005.

\bibitem[Aok1982]{Aok1982} 
I. Aoki, 
\newblock A simulation study on the schooling mechanism in fish,
\newblock Bulletin of the Japan Society of Scientific Fisheries 48 (1982)  1081-1088.

     \bibitem[Bal2008]{Bal2008}
    M.~Ballerini, N.~Cabibbo, R.~Candelier, A.~Cavagna, E.~Cisbani, I.~Giardina, V.~Lecomte, A.~Orlandi, G.~Parisi, A.~Procaccini, M.~Viale, and
    V.~Zdravkovic,
    \newblock Interaction ruling animal collective behavior depends on topological     rather than metric distance: evidence from a field study,
    \newblock  PNAS 105 (2008) 1232-1237.
    
    \bibitem[BCLR2013]{BCLR2013}
    D. Balagué, J. A. Carrillo, T. Laurent and G. Raoul,
    \newblock Dimensionality of local minimizers of the interaction energy,
		\newblock Arch. Rational Mech. Anal. 209 (2013) 1055-1088

\ifx%%
    \bibitem[BKM1984]{BKM1984}
    J. T. Beale, T. Kato, and A. Majda
    \newblock Remarks on the breakdown of smooth solutions for the 3-D Euler equations
    \newblock Comm. Math. Phys. 94(1) (1984), 61-66.
\fi%%%

    \bibitem[BDT2017]{BDT2017}
{\sc N. Bellomo, P. Degond and E. Tadmor (eds)}
``Active Particles'', 
Advances in Theory, Models, and Applications, Vol {\bf 1},
Modeling and Simulation in Science, Engin. Tech.
Birkhäuser (2017)

\bibitem[BDT2019]{BDT2019}
{\sc N. Bellomo, P. Degond and E. Tadmor (eds)}
``Active Particles'', 
Advances in Theory, Models, and Applications, Vol {\bf 2},
Modeling and Simulation in Science, Engin.  Tech.
Birkhäuser (2019)

\bibitem[BeN2005]{BeN2005}
\newblock E. Ben-Naim, Opinion dynamics: Rise and fall of political parties,
\newblock Europhys. Lett., 69 (5) (2005) 671-677.

    
    \bibitem[Bia2012]{Bia2012}
    W. Bialek, A. Cavagna, I. Giardina, T. Morad, E.
    Silvestri, M. Viale, and A.~M. Walczake,
    \newblock Statistical mechanics for natural flocks of birds,
    \newblock {\em PNAS}, 109(13) (2012) 4786-4791.

\bibitem[Bia2014]{Bia2014}
    W. Bialek, A. Cavagna, I. Giardina, T. Morad, O. Pohl,
    E. Silvestri, M. Viale, and A.~M. Walczake,
    \newblock Social interactions dominate speed control in poising natural flocks
    near criticality,
    \newblock {\em PNAS}, 111(20) (2014) 7212-7217.
    
   
\bibitem[CCTT2016]{CCTT2016}
J. ~A. Carrillo, Y.-P. Choi, E. Tadmor, and C. Tan,
\newblock Critical thresholds in 1{D} {E}uler equations with non-local forces,
\newblock Math. Models Methods Appl. Sci. 26(1) (2016) 185-206.

\bibitem[CCMP2017]{CCMP2017}
J. A. Carrillo, Y.-P.Choi, P. B. Mucha, and J. Peszek
\newblock  Sharp conditions to avoid collisions in singular Cucker-Smale interactions
\newblock  Nonlinear Anal. Real World Appl., 37 (2017) 317-328.

\ifx%%%%%
\bibitem[CCP2017]{CCP2017}
J. Carrillo, Y.-P. Choi, and S. Perez,
\newblock A review on attractive-repulsive hydrodynamics
for consensus in collective behavior,
\newblock in ``Active Particles,  Volume 1. 
Advances in Theory, Models, and Applications''
(N. Bellomo, P. Degond and E. Tadmor, eds.),
Birkhäuser 2017.
\fi%%%%%

\bibitem[CCTT2018]{CCTT2018}
J. Carrillo, Y.-P Choi, C. Totzeck and O. Tse,
An analytical framework for consensus-based global optimization method,
Math. Models and Methods in Applied Sciences, 28(6) (2018) 1037-1066.

\bibitem[CDP2009]{CDP2009}
J. A. Carrillo, M. R. D'Orsogna and V. Panferov,
\newblock Double milling in self-propelled swarms from kinetic theory,
\newblock Kinetic and Related Models 2(2) (2009) 363-378.

\bibitem[CFGS2017]{CFGS2017}
J. A. Carrillo, E. Feireisl, P. Gwiazda and A. ´Swierczewska-Gwiazda,
\newblock Weak solutions for Euler systems with non-local interactions,
\newblock J. London Math. Soc. 95(2)  (2017) 705–724.

\bibitem[CFTV2010]{CFTV2010}
J.~A. Carrillo, M. Fornasier, G. Toscani, and F. Vecil,
\newblock Particle, kinetic, and hydrodynamic models of swarming.
\newblock in Naldi, G., Pareschi, L., Toscani, G. (eds.) Mathematical Modeling of Collective Behavior in Socio-Economic and Life Sciences, Series: Modelling and Simulation in Science and Technology, Birkhauser, (2010), 297-336.

\bibitem[CFL2009]{CFL2009}
C. Castellano, S. Fortunato and V. Loreto, 
 \newblock Statistical physics of social dynamics, 
 \newblock Rev. Mod. Phys. 81 (2009) 591-646.

\ifx%%
\bibitem[Cer1988]{Cer1988}
C. Cercignani, 
\newblock The Boltzmann equation and its applications
\newblock  Applied Mathematical Sciences. 67. New York: Springer-Verlag 1988.
\fi%%

\bibitem[Cer1990]{Cer1990}
C. Cercignani
\newblock Mathematical methods in kinetic theory
\newblock New York: Plenum Press 1990.

\bibitem[CDMBC2007]{CDMBC2007}
	Y.-l. Chuang, M. R. D'Orsogna, D. Marthaler, A. L. Bertozzi and L. S. Chayes
\newblock State transitions and the continuum limit for a 2D interacting, self-propelled particle system
	\newblock {\em Physica D}  232 (2007) 33-47.

\bibitem[CF2003]{CF2003}
{\sc I. Couzin and N. Franks}
\newblock {\sl Self-organized lane formation and optimized traffic
flow in army ants}
\newblock { Proc. R. Soc. Lond. B}, {\bf 270} (2003) 139-146.

\bibitem[CKFL2005]{CKFL2005} 
{\sc I.D. Couzin, J. Krause, N. R. Franks, and S. A.  Levin}, {\sl  Effective leadership and decision making in animal groups on the move} Nature {\bf 433}, (2005) 513-516.

\bibitem[CKJRF2002]{CKJRF2002}
I. Couzin, J. Krause, R. James, GD Ruxton, NR Franks,
\newblock Collective memory and spatial sorting in animal groups,
\newblock  J. Theor. Biol. {218(1)} (2002), 1-11.

\bibitem[CS2007a]{CS2007a}
F. Cucker and S. Smale.
\newblock Emergent behavior in flocks,
\newblock IEEE Trans. Automat. Control, 52(5) (2007) 852-862.

\bibitem[CS2007b]{CS2007b}
F. Cucker and S. Smale.
\newblock On the mathematics of emergence.
\newblock  Japan. J. Math. 2(1):197-227, 2007.

\bibitem[DAWF2002]{DAWF2002}
G. Deffuant, F. Amblard, G. Weisbuch and T. Faure,
How can extremism prevail? A study based on the relative agreement interaction model, J. of Artificial Societies and Social Simulation, 5(4) (2002)
\href{http://jasss.soc.surrey.ac.uk/5/4/1.html}{http://jasss.soc.surrey.ac.uk/5/4/1.html}.

\bibitem[DMPW2019]{DMPW2019}
R. Danchin, P.~B. Mucha, J. Peszek, and B. Wroblewski,
\newblock Regular solutions to the fractional Euler alignment system in the
{B}esov spaces framework,
\newblock  Math. Models and Methods in Appl. Sciences,, 
29(1) (2019) 89-119.

\bibitem[DS2020]{DS2020}
H. Dietert and R. Shvydkoy,
\newblock On Cucker-Smale dynamical systems with degenerate communication,
\newblock Analysis and Applications 2020 \href{https://doi.org/10.1142/S0219530520500050}{https://doi.org/10.1142/S0219530520500050}

\bibitem[DKRT2018]{DKRT2018}
T. Do, A. Kiselev, L. Ryzhik, C. Tan,
\newblock Global regularity for the fractional Euler alignment system,
\newblock ARMA 228(1) (2018) 1-37.

\bibitem[Fie1973]{Fie1973} 
M. Fiedler, 
\newblock Algebraic connectivity of graphs, 
\newblock  Czech. Math. J. 23(98), (1973) 298-305.

\bibitem[Fie1989]{Fie1989}
M. Fiedler,
\newblock Laplacian of graphs and algebraic connectivity,
\newblock  Combinatorics and Graph Theory 25 (189) 57-70.

\bibitem[FK2019]{FK2019}
A. Figalli and M.-J. Kang,
\newblock A rigorous derivation from the kinetic Cucker–Smale model to the pressureless Euler system with nonlocal alignment,
\newblock Anal. PDE, 1293)  (2019), 843-866.

\bibitem[GWBL2012]{GWBL2012}
A. Galante, S. Wisen, D. Bhaya and D. Levy,
\newblock Modeling local interactions during the motion of cyanobacteria,
\newblock J. of Theoretical Biology 309 (2012) 147-158.

\bibitem[GPY2017]{GPY2017}
J. Garnier, G. Papanicolaou and T-W Yang,
\newblock Consensus Convergence with Stochastic Effects,
\newblock Vietnam Journal of Math. 45 (1-2) (2017) 51-75.

\bibitem[GTLW2017]{GTLW2017}
P.  Gerlee, K. Tunstrm, T. Lundh  and B. Wennberg,
 Impact of anticipation in dynamical systems. Phys. Rev. E 96, 062413, 2017

\bibitem[Gol1997/98]{Gol1997/98}
F. Golse
\newblock Lecture Notes, The Boltzmann equation and its hydrodynamic limits
\newblock Ecole Thematique, 
Universit\'{e} de Franche-Comt'{e}, CNR -- CEA, Besancon, 1997;
\newblock and
\newblock From kinetic to macroscopic models
\newblock Lecture Notes, 
Session L'Etat de la Recherche de la S.M.F, Universit\'{e} d'Orl\'{e}ans, 1998.


\bibitem[HKK2014]{HKK2014}
S.-Y. Ha, M.-J. Kang, and B.~Kwon,
\newblock A hydrodynamic model for the interaction of {C}ucker-{S}male particles and incompressible fluid,
\newblock Math. Mod. Methods. Appl. Sci. 24:2311--2359, 2014.
   

\bibitem[HL2009]{HL2009}
S.~Y Ha and J.~G Liu,
\newblock A simple proof of the {Cucker-Smale} flocking dynamics and mean-field   limit,
\newblock  Communications in Mathematical Sciences 7(2):297--325, 2009.

\bibitem[HT2008]{HT2008}
S.-Y. Ha and E. Tadmor
{From particle to kinetic and hydrodynamic descriptions of flocking},
Kinetic and Related Models, 1, no. 3, (2008), 415-435.

\bibitem[HT2017]{HT2017}
S. He and E. Tadmor,
\newblock Global regularity of two-dimensional flocking hydrodynamics,
\newblock Comptes rendus - Mathématique Ser. I 355 (2017) 795-805.

\bibitem[HT2020]{HT2020}
S. He and E. Tadmor,
\newblock A game of alignment: collective behavior of multi-species,
\newblock Annal. de l'institut Henri Poincare (c) Non Linear Analysis, in press (2020) \href{https://doi.org/10.1016/j.anihpc.2020.10.0030}{https://doi.org/10.1016/j.anihpc.2020.10.0030}

\ifx%%%%%%%%%%%%%
\bibitem[IJS2016]{IJS2016}
C. Imbert, T. Jin and  R. Shvydkoy,
\newblock Schauder estimates for an integro-differential equation
with applications to a nonlocal Burgers equation,
\newblock Annales Math\'ematiques de Toulouse,  to appear (2016).
\fi%%%%%%%%%%%%%%%%%%%%%

\bibitem[JJ2015]{JJ2015}
P.-E. Jabin and S. Junca,
A continuous model for rating, 
\newblock SIAM J. Appl. Math,  75(2) (2015) 420-442.

\bibitem[JLL2020]{JLL2020}
S. Jin, L. Li and J.-G. Liu, 
Random Batch Methods (RBM) for interacting particle systems,
\newblock J. of Comput. Physics 400(1) (2020) 108877

\bibitem[KV2015]{KV2015}
M.-J. Kang and A. F. Vasseur, 
\newblock Asymptotic analysis of Vlasov-type equations under strong local alignment regime,
\newblock Math. Models Methods Appl. Sci., 25(11) ( 2015) 2153-2173.

\bibitem[Ka2011]{Ka2011}
Y. Katz, K. Tunstrøm, C.C. Ioannou, C. Huepe and I.D. Couzin, 
\newblock Inferring the structure and dynamics of
interactions in schooling fish,
\newblock  Proc. Natl. Acad. Sci. USA  108, (2011) 18720-18725.

\bibitem[Kant1784]{Kant1784}
Immanuel Kant,
\newblock On History, 
\newblock Bobbs-Merrill, 1963.

\bibitem[KMT2013]{KMT2013}
T.K. Karper, A Mellet, K Trivisa, 
\newblock Existence of weak solutions to kinetic flocking models,
\newblock  SIAM J. Math. Anal., 45 (2013) 215-243.

\ifx%%%%%%%%%%%%%
\bibitem[KMT2014]{KMT2014}
T.K. Karper, A Mellet, K Trivisa,
\newblock  On strong local alignment in the kinetic Cucker-Smale model.
\newblock in ``Hyperbolic conservation laws and related analysis with applications'', volume 49 of Springer Proc. Math. Stat., pages
227-242. Springer, Heidelberg, 2014.
\fi%%%%%%%%%%%%%%%%%%%

\bibitem[KMT2015]{KMT2015}
T.K. Karper, A Mellet, K Trivisa,
\newblock Hydrodynamic limit of the kinetic Cucker–Smale flocking model,
\newblock Mathematical Models and Methods in Applied Sciences 25 (1) (2015) 131-163.

\bibitem[Kar2008]{Kar2008}
E. Karsenti, 
\newblock Self-organization in cell biology: a brief history
\newblock Nature Reviews Molecular Cell Biology (9), 2008, 255-262.

\ifx%%%
\bibitem[Ken1938]{Ken1938}
E.  H.  Kennard,
\newblock Kinetic Theory of Gases: With an Introduction to Statistical Mechanics
\newblock McGraw-Hill 1938.
\fi%%%

\bibitem[LS2019]{LS2019}
D. Lear and R. Shvydkoy,
\newblock
 Existence and stability of unidirectional flocks in hydrodynamic Euler alignment
systems,
\newblock  \href{ArXiv1911.10661}{https://arxiv.org/abs/1911.10661}, 2019, Analysis \& PDEs, in press.

\bibitem[Les2020]{Les2020} 
Trevor M. Leslie, 
\newblock On the Lagrangian Trajectories for the One-Dimensional Euler Alignment Model without Vacuum Velocity,
\newblock  Comptes Rendus Mathématique 358(4) (2020) 421-433.

\bibitem[Lev1996]{Lev1996}
C. D. Levermore,  
\newblock Moment closure hierarchies for kinetic theories,
\newblock  Journal of statistical Physics 83.5-6 (1996): 1021-1065.

 \bibitem[LZTM2019]{LZTM2019}
    F.~Lu, M.~Zhong, S.~Tang and M.~Maggioni, 
    \newblock Nonparametric inference of interaction laws in systems of agents from trajectory data,
    \newblock {\em Proc. Nat. Acad. Sci.}, 116 (29) (2019) 14424-14433.
    
\bibitem[MLK2019]{MLK2019}
 Z. Mao, Z. Li and G. Karniadakis,
 \newblock Nonlocal Flocking Dynamics: Learning the Fractional Order of PDEs from Particle Simulations,
 \newblock Comm.  Applied Math. and Computation 1, (2019) 597-619.

\bibitem[MMPZ2019]{MMPZ2019}
P. Minakowski, P. B. Mucha, J. Peszek, and E. Zatorska,
\newblock Singular Cucker-Smale Dynamics, 
\newblock in ``Active Particles'', 
Advances in Theory, Models, and Applications, Vol {\bf 2} (N. Bellomo, P. Degond and E. Tadmor, eds.), 
Birkhäuser (2019).

\bibitem[MCEB2015]{MCEB2015}
A. Morin, J.-B. Caussin, C. Eloy, and D Bartolo
Collective motion with anticipation: Flocking, spinning, and swarming
Phys. Rev. E 91, 012134,  2015.

\bibitem[MT2011]{MT2011}
S. Motsch and E. Tadmor,
\newblock A new model for self-organized dynamics and its flocking behavior, 
\newblock J. Stat. Physics 144(5) (2011) 923-947.

\bibitem[MT2014]{MT2014}
S. Motsch and E. Tadmor,
\newblock Heterophilious dynamics enhances consensus,
\newblock SIAM Review 56(4) (2014) 577-621.

\ifx%%
\bibitem[Ou2017]{Ou2017}
N.T. Ouellette,
\newblock Toward a “Thermodynamics” of Collective Behavior,
\newblock SIMA News, Nov. 2017.
\fi%%%

\bibitem[Pes2015]{Pes2015}
J. Peszek, 
\newblock Discrete Cucker-Smale flocking model with a weakly singular weight,
\newblock SIAM J. Math. Anal., 47(5) (2015) 3671-3686.

\bibitem[PTTM2017]{PTTM2017}
R. Pinnau, C. Totzeck, O. Tse  and S. Martin,
\newblock A consensus-based model for global optimization
and its mean-field limit,
\newblock Math. Models and Methods in Applied Sciences 27(01) (2017),183-204.

\bibitem[PST1993]{PST1993}
M. A. Pinsky, N.K. Stanton and P.E. Trapa,
\newblock Fourier series of radial functions in several variables,
\newblock J. Functional Anal.  116(1) (1993) 111-132.

\bibitem[PS2019]{PS2019}
D. Poyato and J. Soler,
\newblock Euler-type equations and commutators in singular and hyperbolic limits of kinetic Cucker-Smale models,
\newblock M3AS 27(6) (2017) 1089-1152.

\bibitem[Rey1987]{Rey1987} 
C. Reynolds, 
\newblock Flocks, herds and schools: A distributed behavioral model, 
\newblock ACM SIGGRAPH {21} (1987) 25-34. 

\bibitem[ST2020a]{ST2020a} 
R. Shu and E. Tadmor, 
\newblock Flocking hydrodynamics with external potentials,
\newblock  Archive Rational Mech. and Anal. 238 (2020) 347-381.

\bibitem[ST2021]{ST2021} 
R. Shu and E. Tadmor, 
\newblock Anticipation breeds alignment,
\newblock   Archive Rational Mech. and Anal. (2021), \href{https://doi.org/10.1007/s00205-021-01609-8}{https://doi.org/10.1007/s00205-021-01609-8}.

\bibitem[Sh2019]{Sh2019}
R.~Shvydkoy.
\newblock Global existence and stability of nearly aligned flocks,
\newblock J. Dynamics and Differential Eqs. (31) (2019) 2165-2175.
    
\bibitem[ST2017a]{ST2017a}
R. Shvydkoy and E. Tadmor,
\newblock Eulerian dynamics with a commutator forcing,
\newblock Transactions of Mathematics and its Applications 1(1) (2017) 1-26.

\bibitem[ST2017b]{ST2017b}
R. Shvydkoy and E. Tadmor,
\newblock Eulerian dynamics with a commutator forcing II: flocking,
\newblock Discrete and Continuous Dynamical Systems-A 37(11) (2017) 5503-5520.


\bibitem[ST2018a]{ST2018a}
R. Shvydkoy and E. Tadmor,
\newblock Eulerian dynamics with a commutator forcing III. Fractional diffusion of order $0<\alpha<1$,
\newblock Physica D, 376-377 (2018) 131-137.

\bibitem[ST2020b]{ST2020b} 
R. Shvydkoy and E. Tadmor,
\newblock Topologically-based fractional diffusion and emergent dynamics with short-range interactions,
\newblock  SIAM J. Math. Anal. 52(6) (2020) 5792-5839.




\ifx%%%
\bibitem[Tad1986]{Tad1986}
E. Tadmor
\newblock A minimum entropy principle in the gas dynamics equations
\newblock Applied Numerical Mathematics 2 (1986), 211-219.

\bibitem[Tad2020]{Tad2020}
E. Tadmor, 
\newblock Vanishing viscosity and dual solutions of the two-dimensional pressureless equations.
\newblock in preparation.
\fi%%%

\bibitem[Tad2021]{Tad2021}
E. Tadmor
\newblock Global smooth solutions and emergent behavior of multi-dimensional Euler alignment system,
\newblock In preparation.

\bibitem[TT2014]{TT2014}
E. Tadmor and C. Tan.
\newblock Critical thresholds in flocking hydrodynamics with non-local
  alignment,
\newblock  Philos. Trans. R. Soc. Lond. Ser. A Math. Phys. Eng. Sci.,
  372(2028):20130401, 22, 2014.

\ifx%%%
\bibitem[Tan2017]{Tan2017}
\newblock  Finite time blow up for a fluid mechanics model with nonlocal velocity.
\newblock  ArXiv:1708.09360.
\fi%%%

\bibitem[VCBCS1995]{VCBCS1995} 
T. Vicsek,  Czir\'ok, E. Ben-Jacob, I. Cohen, and O. Schochet, 
\newblock Novel type of phase transition in a system of self-driven particles,
\newblock Phys. Rev. Lett. {75} (1995) 1226-1229.

\bibitem[VZ2012]{VZ2012}
 T.~Vicsek and A. Zefeiris,
\newblock Collective motion,
\newblock  Physics Reprints, {517} (2012) 71-140.

\end{thebibliography}
\end{document}